\documentclass[12pt,a4paper]{amsart}

\usepackage{amsmath,amsthm,amsfonts,amssymb,mathrsfs}
\usepackage{subfigure} 
\usepackage{graphicx}
\usepackage{color}

\usepackage{array}
\usepackage{booktabs}
\usepackage{caption}
\usepackage{float}

\newcommand{\ud}{\mathrm{d}}

\usepackage{amsmath}
\usepackage{amssymb}
\usepackage{epsfig}
\usepackage{float}
\usepackage{latexsym,graphics}
\usepackage{dsfont}
\newtheorem{theorem}{Theorem}[section]
\newtheorem{proposition}[theorem]{Proposition}

\newtheorem{definition}[theorem]{Definition}

\newtheorem{remark}[theorem]{Remark}

\newcommand{\Cu}[1]{\mathbf{C^{#1}}}
	
\newcommand{\Cl}[1]{\mathbf{C_{#1}}}			
			
\newcommand{\BL}{\mathbf{BL}}	
\newcommand{\BLN}{\mathbf{BL}}	
\newcommand{\TV}{\mathbf{TV}}	
\newcommand{\Lip}{\mathbf{Lip}} 				
\newcommand{\reali}{\mathbb{R}}
\newcommand{\naturali}{\mathbb{N}}			

\newcommand{\N}{\mathbb{R^{+}}}

\def\sqr#1#2{{\vcenter{\vbox{\hrule height.#2pt\hbox{\vrule width.#2pt height#1pt \kern#1pt\vrule width.#2pt}\hrule height.#2pt}}}}

\renewcommand{\d}[1]{\mathrm{d}{#1}}	
\DeclareMathOperator*{\esssup}{ess\,sup}
	
\newcommand{\norma}[1]{{\left \|#1\right \|}} 		
\renewcommand{\phi}{\varphi}		

\newenvironment{proofof}[1]
{\smallskip\noindent{\textbf{Proof~of~#1.}}
\hspace{1pt}}{\hspace{-5pt}{\nobreak\nobreak\hfill\nobreak
$\square$\vspace{2pt}\par}\smallskip\goodbreak}

\allowdisplaybreaks[4]

\newtheorem{assumptions}[theorem]{Assumptions}

\newcommand{\R}{\mathbb{R}}

\begin{document}

\title[Asynchronous exponential growth in measure spaces]{Asynchronous exponential growth for structured population models in measure space}

\author[C. D\"{u}ll]{Christian D\"{u}ll$^1$}
\author[J. Z. Farkas]{J\'{o}zsef Z. Farkas$^2$}
\author[P. Gwiazda]{Piotr Gwiazda$^3$}
\author[A. Marciniak-Czochra]{Anna Marciniak-Czochra$^4$}

\address[$^1$]{Institute for Mathematics, Heidelberg University, 69120 Heidelberg, Germany}
\email{duell@math.uni-heidelberg.de}
\address[$^2$]{Departament de Matem\`{a}tiques, Universitat Aut\`{o}noma de Barcelona, Bellaterra, 08193, Spain}
\email{JozsefZoltan.Farkas@uab.cat}
\address[$^3$]{Institute of Applied Mathematics and Mechanics,  University of Warsaw, Warszawa 02-097, Poland}
\email{pgwiazda@mimuw.edu.pl}
\address[$^4$]{Institute for Mathematics and Interdisciplinary Center for Scientific Computing (IWR), Heidelberg University, 69120 Heidelberg, Germany}
\email{anna.marciniak@iwr.uni-heidelberg.de}

\date{}
\date{}

\begin{abstract}
This paper studies the asymptotic behaviour of a structured population model on the space of nonnegative Radon measures. Such formulations naturally arise when solutions develop concentration phenomena or when the population is represented by discrete cohorts. Asynchronous exponential convergence of measure solutions towards a one-dimensional global attractor is established. While such results are classical in the $L^1$ setting, their extension to measure spaces requires different compactness and spectral arguments. We identify conditions under which the classical asymptotic behaviour persists in the space of Radon measures endowed with the flat metric, thereby extending the theory of asynchronous exponential growth beyond the classical $L^1$ framework.\\

\noindent\textbf{Keywords:}
Structured population models, Radon measures,
asynchronous exponential growth,
positive semigroups,
spectral theory

\end{abstract}


\maketitle

\section{Introduction}
\noindent Mathematical models describing the dynamics of physiologically structured populations  have proved to be useful tools in a range of applications in cell biology, evolution and ecology, see for example \cite{MagalRuan,MetzDiekmann, Thieme, Webb}. The models take the form of partial or integro-differential equations describing the  evolution of the density of individuals distributed with respect to a specific structural variable. The choice of the structuring variable is motivated by applications and reflects the functionally relevant heterogeneity of the study population, such as age, size, phenotypic trait or cell maturity, see e.g. \cite{AcklehFarkas, CalsinaCuadrado, CalsinaCuadrado2, Desvillettes, Perthame}.\\
Classical results for structured population models were obtained in the space of Lebesgue integrable functions (densities), as presented in the seminal monograph \cite{Webb}. The choice of the state space $L^1$ is motivated by applications, as convergence with respect to the natural norm implies convergence of the total population size. Moreover, the spectral theory of positive operators on abstract Lebesgue spaces is well-developed and provides a convenient framework for analysing structured population models, see e.g. \cite{Arendt,Clement,EngelNagel,Sch}. In particular, combining spectral methods with semigroup theory has led to a rich theory describing the long-term behaviour of solutions. Positive irreducible semigroups with suitable compactness properties were shown to possess favourable spectral properties directly linked to asynchronous exponential growth and convergence to finite-dimensional attractors; see e.g. \cite{Arendt,Clement,EngelNagel,Thieme1998JMAA,Thieme1998DCDS,Webb,Webb1987,WebbGrabosch}.
\\
More recently, the study of structured population models has been extended to spaces of nonnegative Radon measures; see, for instance, \cite{BGMC,CCGU,Duell_book,EversHilleMuntean,GLMC,GMC,Piccoli,two_sex}. This more general approach is particularly useful for models for which local or global well-posedness may fail in Lebesgue spaces. One example is a class of quasilinear equations exhibiting a concentration phenomenon. Due to the nonlinear transport term, solutions with $L^1$ initial data may concentrate in finite time, see for example \cite{Ackleh1}. Another example is provided by selection or selection-mutation models asymptotically showing a concentration of mass due to specific non-local feedbacks \cite{BCMC,BGMC,MagalWebb}. At the same time, the idea of representing a heterogeneous population as a sum of masses concentrated in different points of the individual state space can also be motivated from an application perspective. For example, considering a homogeneous initial population naturally leads to defining the initial conditions of the model in terms of a Dirac measure, as introduced already in \cite{MetzDiekmann}.\\
A mathematical framework for the analysis of structured population models formulated on the space of nonnegative Radon measures has been proposed in \cite{GLMC}, using the flat metric (dual bounded Lipschitz distance). The underlying functional analytic theory and its application to showing well-posedness of the models, in particular existence, uniqueness and Lipschitz continuous dependence of solutions on the model ingredients, are presented  in a recent monograph \cite{Duell_book}. As a first consequence, the framework allowed  establishing the stability of numerical schemes based on a particle method, for example the escalator boxcar train (EBT) algorithm, see e.g., \cite{EBT,CGU,GJMCU}.\\
Beyond resolving regularity issues, the measure framework allows for both discrete and continuous distributions, as well as mixtures thereof. Moreover, it naturally extends to general complete separable metric spaces, including graphs \cite{M3AS_measures}. This provides a link to concepts and methods developed in stochastic modelling. In particular, asymptotic properties of conservative measure solutions can be studied using concentrating Feller operators \cite{MR2287895,MR2271485,MR2995659,Thieme_2022}, while transport-type equations are closely related to ergodic properties of Markov processes \cite{MR2857021}. Such connections have recently been used to establish nonexpansiveness of solution semigroups in metrics related to the flat distance \cite{FournierPerthame2,FournierPerthame1}.\\
In this paper we study the asymptotic behaviour of non-conservative structured population models formulated in spaces of measures. While asymptotic properties of conservative measure-valued systems can often be analysed using methods originating from stochastic processes, non-conservative models are more naturally studied using semigroup-theoretic techniques. Such methods have previously been applied to integro-differential equations formulated in measures under the total variation norm \cite{BuergerBomze,MischlerScher}. However, this approach cannot be extended directly to structured population models with transport, since the transport semigroup fails to be strongly continuous with respect to the total variation norm.\\
Our goal is therefore to transfer the classical asymptotic results available in the $L^1$ setting to a measure framework endowed with the flat metric. 
The latter provides a topology in which transport semigroups are strongly continuous and allows us to construct a strongly continuous positive semigroup on an appropriate Banach lattice. Using a dual formulation together with spectral methods for positive semigroups, we establish conditions under which asynchronous exponential growth persists for measure solutions.\\
The assumptions imposed on the model ingredients are comparable to those used in the classical $L^1$ theory \cite{KangShigui,Webb85,Webb1987}. The main difference lies in the use of quasi-compactness rather than eventual compactness, which allows us to treat unbounded state spaces. In particular, unlike several earlier studies \cite{AcklehFarkas,CDF,FarkasHinow}, we do not assume an upper bound on the structuring variable. While this setting is often more natural in applications, it introduces additional challenges in the spectral analysis of the governing semigroup, as already observed for related models formulated on Lebesgue spaces \cite{FarkasHagen}.\\
More precisely, we formulate conditions ensuring that the asynchronous exponential convergence known from the classical $L^1$ framework remains valid in the measure setting equipped with the flat metric. To the best of our knowledge, this is the first result establishing asynchronous exponential growth for a broad class of physiologically structured population models formulated on spaces of Radon measures.\\
In this paper, we focus on a generic structured population model arising in various contexts of mathematical biology. However, the proposed approach is not restricted to this setting and can also be applied to coagulation--fragmentation models formulated in measure spaces, see \cite{MR4330732}, since, after a suitable reformulation of the fragmentation kernel, these models fit naturally into the measure PDE framework considered here \cite[Section~5.3]{Duell_thesis}.
\\

\noindent The paper is structured as follows: First, we introduce the most important measure theoretic concepts, the model formulation in the space of nonnegative 
Radon measures as well as well-posedness results. In Section \ref{section:main_result} we  construct the semigroups on an appropriate Banach lattice and formulate results on their strong continuity and asymptotic behaviour. The latter is proven in Section \ref{section:asymptotic_proof} by establishing quasi-compactness of the semigroup. The paper concludes with an irreducibility criterion ensuring asynchronous exponential growth towards a one-dimensional attractor.

\section{Problem Formulation in the setting of measures}
\label{section:problem}

\noindent We consider a linear structured population model formulated in the space of finite nonnegative measures $\mathcal{M}^+(\R^+)$. Specifically, we will work with the following model 
\begin{align}\label{Model}
\left\{\begin{array}{lll}
\partial_t \mu_t+\partial_x \left(b(x) \, \mu_t \right)+c(x)\,\mu_t &= \displaystyle\int_{\R^+} (\eta(x))(y)\, \ud\mu_t(y),& (t,x) \in  \R^+ \times \R^+,\\ 
    b(0)D_{\lambda}\mu_t(0)&= 0& t \in \R^+,\\
    \mu_{t=0}&=\mu_0,
\end{array}\right.
  \end{align}
where $D_{\lambda}\mu_t$ denotes the Radon-Nikodym derivative of $\mu_t$ with respect to the Lebesgue measure $\lambda$ at the point $x=0$ and $\mu_0 \in  \mathcal{M}^+(\R^+)$ is some initial measure. The model involves a transport operator describing the development of individuals with respect to a physiological structuring variable, an integral operator characterising the birth/recruitment process, and a li\-ne\-ar decay term accounting for individual mor\-ta\-li\-ty. 
We equip the state space $\mathcal{M}^+(\R^+)$ with the so called flat norm
    \begin{align}
    \label{new_flat}
    ||\mu||_{\BLN^*}:=\sup\left\{\int_{\R^+}\phi\,\ud \mu:\,||\phi||_{\BLN}\le 1\right\},
    \end{align}
where  $\|\cdot\|_{\BLN}$ is a norm on the space $\BL(\mathbb{R}^+)$ of bounded Lipschitz functions, given by 
\begin{align}
    \label{ourBL}
    \|\phi\|_{\BL}:=\max\{ \|\phi\|_{\infty}, |\phi|_{\Lip}\}.
    \end{align}   
Here we used the notation 
    \begin{align*}
    \|\phi\|_{\infty}:=\displaystyle\sup_{x\in\R^+}\left|\phi(x)\right|,\qquad \text{and} \qquad
    |\phi|_{\Lip}:=\displaystyle\sup_{x\neq y}\frac{|\phi(x)-\phi(y)|}{|x-y|}.
    \end{align*}
The flat norm provides a convenient setting for considering differential equations in a measure setting, see for example \cite{MR4330732,Duell_book, M3AS_measures}.

\begin{remark}
\begin{enumerate}
\item [i)]  The cone $\mathcal{M}^+(\R^+)$ defines a natural ordering "$\leq$" on the space of finite signed measures $\mathcal{M}(\R^+)$ via
\begin{align}
\label{partial_order}
       \mu\leq \nu \hspace{0.3cm} \Longleftrightarrow \hspace{0.3cm} \nu-\mu\in \mathcal{M}^+(\R^+), \hspace{0.3cm} i.e.\hspace{0.3cm} \mu(A)\leq \nu(A)\hspace{0.3cm} \forall A\in \mathcal{B}(\R^+),
    \end{align}
with $\mathcal{B}(\R^+)$ denoting the usual Borel $\sigma$-algebra. 
\item [ii)] \label{remark:measure_space} The partial order \eqref{partial_order} can be extended to the closure 
    \begin{align*}
    E:=\overline{\mathcal{M}(\R^+)}^{\|\cdot\|_{\BLN^*}}
    \end{align*}
with positive cone $E^+=\mathcal{M}^+(\R^+)$ as $\mathcal{M}^+(\R^+)$ is closed with respect to $\|\cdot\|_{\BLN^*}$, see \cite{vanGaans} and \cite[Thm G.42]{Duell_book}. Consequently, the space $\mathcal{M}^+(\R^+)$ can be seen as the positive cone of the Banach lattice $(E,\|\cdot\|_{\BLN^*})$. Furthermore, in view of  \cite[Thm 1.41]{Duell_book} we  have $E^*=BL(\R^+)$.
\end{enumerate}
\end{remark}

\noindent A suitable notion of continuity for measures is given by the concept of narrow continuity, which requires convergence in duality with all bounded continuous functions, i.e. a sequence $(\mu^n)_{n\in\naturali} \subset \mathcal M^+(\reali^+)$ \textbf{converges narrowly} to a measure $\mu \in \mathcal M^+(\reali^+)$, if and only if 
\begin{equation*}
\lim_{n \to \infty} \int_{\reali^+} \phi(x) \,\ud\left(\mu^n - \mu\right) (x) = 0, \quad \forall\phi \in \Cl{b}(\reali^+).
\end{equation*}
Similarly, we say that a mapping $\mu_{\bullet} : [0,T] \mapsto {\mathcal M}^+(\reali^+)$ is \textbf{narrowly continuous}, if for every $\phi \in \Cl{b}(\reali^+)$ the function
    \begin{align*}
    f:[0,T]\mapsto \reali,\quad f(t) = \int_{\reali^+} \phi(x) \,\ud \mu_t(x)
    \end{align*}
is continuous. We remark that according to Theorem 1.57 in \cite{Duell_book} narrow continuity on the cone $\mathcal{M}^+(\R^+)$ is equivalent to convergence with respect to the flat norm $\|\cdot\|_{BL^*}$.\\
\noindent Solutions to model \eqref{Model} are to be understood in a weak sense. In particular, for a finite time interval $[0,T]$ we introduce the following 
\begin{definition}
 \label{def:WeakSolution}
Given $T>0$, a function $\mu_{\bullet} \colon [0,T] \to \mathcal{M}^+(\R^+)$ is a \textbf{measure solution}
of model \eqref{Model}, if $\mu_{\bullet}$ is narrowly continuous, and for all $\phi \in (\Cu{1}
 \cap \BL) \left(\reali^+\times \reali^+\right)$ the following equality holds 
 \begin{align}
 \begin{split}
 \label{Formulation:WeakSolution}
  & \int_{\reali^+} \phi(T,x) \; \d{\mu_T}(x) - \int_{\reali^+}
  \phi(0,x) \; \d{\mu_0}(x)\\
  =&\int_0^T \int_{\reali^+} \left(
   \partial_t \phi(t,x) + b(x) \; \partial_x
   \phi(t,x) - c(x) \; \phi(t,x) \right)\,\ud\mu_t(x)\,\ud{t}   \\
  &+ \int_0^T \int_{\reali^+} \left( \int_{\reali^+} \phi(t,y)
   \d{\left[\eta(x)\right]} (y) \right)\, \ud\mu_t(x)\,\ud{t}.
   \end{split}
\end{align}
\end{definition}
\noindent
 
\begin{remark}
\begin{itemize}
    \item [i)] The term $\int_{\reali^+} \phi(t,y) \,\ud{\left[\eta(x)\right]}(y)$ denotes the integral of $\phi(t,y)$ with respect to the measure $\eta(x)$ in the variable $y$. The other integrals are to be understood similarly.
    \item [ii)] A measure solution in the sense of Definition \eqref{def:WeakSolution} also satisfies the weak formulation \eqref{Formulation:WeakSolution} with the final time $T$ replaced by $s\in [0,T]$. This can be shown by choosing a suitable cut-off function $h_{\varepsilon}:[0,T]\to [0,1]$ which satisfies $h_{\varepsilon}\!\mid_{[0,s]}\equiv 1$ and  decreases linearly to $0$ in $[s,s+\varepsilon]$.
 \end{itemize}
 \end{remark}

\noindent We impose the following assumptions on the model parameters.
\begin{assumptions}\label{Assum} 
\ \begin{itemize}
\item[(i)] $c \in  \BL(\R^+)\cap C^1(\R^+)$.
\item[(ii)] $x\mapsto \eta(x) \in  \BL\left(\R^+; (\mathcal{M}^+(\R^+), \norma{\cdot}_{\BLN^*})\right)$.
\item[(iii)] $b \in  \BL(\R^+)\cap C^1(\R^+)$, $b>0$. 
\item[(iv)] $b' \leq 0$ .
\item [(v)] There exists a constant $\kappa>0$, such that 
    \begin{align*}
        c(x) \geq \left|c'(x)\right| + \kappa\qquad \forall x \in \R^+.
    \end{align*}
\end{itemize}
\end{assumptions}
\noindent 
We remark that a norm in the space $\BL\left(\R^+; (\mathcal{M}^+(\R^+), \norma{\cdot}_{\BLN^*})\right)$ is defined as
\begin{displaymath}
\norma{\eta}_{\BLN} = \sup_{x\in\reali^+}\norma{\eta(x)}_{\BLN^*} + \Lip(\eta),
\end{displaymath}
where $\Lip(\eta)$ denotes the usual Lipschitz constant of $\eta$. \\

\noindent Assumptions (i)–(iii) ensure the well-posedness of models of the form \eqref{Formulation:WeakSolution}. Similar results for a comparable model on $\mathbb{R}^+$ with a slightly less general integral operator can be found in Section 2.2 of \cite{Duell_book}, particularly in Theorem 2.19 and Lemma 2.25. Alternatively, we refer to \cite[Section 3.3]{Duell_thesis}, where a well-posedness theory is developed for a non-autonomous model on $\mathbb{R}^d$ involving the integral operator. The additional assumptions (iv) and (v) are required for the analysis of the asymptotic behaviour of solutions.\\

\noindent
For the reader's convenience, we recall the well-posedness results in a form adapted to our problem.

\begin{theorem} \label{existenceAgnieszka}
Let Assumptions \ref{Assum}(i)--(iii) hold and fix $T>0$. Then the model \eqref{Model} (considered on the time interval $[0,T]$) generates a semigroup
\[
\{\mathcal{T}(t)\}_{t\in[0,T]}:\mathcal{M}^+(\mathbb{R}^+)\to\mathcal{M}^+(\mathbb{R}^+)
\]
with the following properties.
\begin{enumerate}
\item $\mathcal{T}(0)=\mathbf{Id}$, and for all $t_1,t_2\in[0,T]$ such that $t_1+t_2\in[0,T]$,
\[
\mathcal{T}(t_1)\circ\mathcal{T}(t_2)=\mathcal{T}(t_1+t_2).
\]

\item For every $t\in[0,T]$ and every $\mu_0\in\mathcal{M}^+(\mathbb{R}^+)$, let
$\mu_t=\mathcal{T}(t)\mu_0$. Then the trajectory
$t\mapsto\mu_t$ is the unique solution of \eqref{Model} with initial condition $\mu_0$ in the sense of Definition~\ref{def:WeakSolution}. Moreover, $t\mapsto\mu_t$ is Lipschitz continuous and satisfies
\[
\|\mathcal{T}(t)\mu_0-\mu_0\|_{\BLN^*}
\le C_1(t)\,t\,\|\mu_0\|_{\TV},
\]
where $\|\cdot\|_{\TV}$ denotes the total variation norm and $C_1$ depends only on $t$ and on the norms of the model coefficients.

\item For every $t\in[0,T]$ and every $\mu_1,\mu_2\in\mathcal{M}^+(\mathbb{R}^+)$,
\[
\|\mathcal{T}(t)\mu_1-\mathcal{T}(t)\mu_2\|_{\BLN^*}
\le
C_2(t)\,
\|\mu_1-\mu_2\|_{\BLN^*},
\]
where $C_2$ depends only on $t$ and on the norms of the model coefficients.
\end{enumerate}
\end{theorem}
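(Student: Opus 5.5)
The plan is to construct the semigroup by a fixed-point (Duhamel) argument built on the explicit solution of the pure transport–decay problem, and then to read off properties (1)–(3) from that construction.

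\textbf{Transport–decay part.} Let $X(t,x)$ be the flow of $\dot X=b(X)$ with $X(0,x)=x$. Since $b\in\BL\cap C^1$ and $b>0$, this flow is globally defined for $t\ge0$, maps $\R^+$ into itself, and is Lipschitz in $x$ with constant $e^{t|b|_{\Lip}}$. For $\mu\in\mathcal M^+(\R^+)$ define
\[
S(t)\mu:=X(t,\cdot)_\#\Bigl(e^{-\int_0^t c(X(s,\cdot))\,\ud s}\,\mu\Bigr).
\]
Testing against $\phi\in\BL$ shows that $\int\phi\,\ud S(t)\mu=\int \psi_t\,\ud\mu$, where $\psi_t(x)=\phi(X(t,x))e^{-\int_0^t c(X(s,x))\ud s}$. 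One checks that $\|\psi_t\|_{\BL}\le C(t)\|\phi\|_{\BL}$. This gives the flat-norm Lipschitz estimate (3) for $S$. Combining $|\psi_t(x)-\phi(x)|\le (|\phi|_{\Lip}\|b\|_\infty+\|c\|_\infty\|\phi\|_\infty)\,t$ gives $\|S(t)\mu-\mu\|_{\BLN^*}\le Ct\|\mu\|_{\TV}$. Because $b>0$, characteristics leave the boundary and nothing enters at $x=0$, which is consistent with the boundary condition $b(0)D_\lambda\mu_t(0)=0$. By the method of characteristics, $S(t)\mu$ solves the weak formulation \eqref{Formulation:WeakSolution} without the integral term.

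\textbf{Adding the birth operator.} Set $(K\mu)(A):=\int_{\R^+}\eta(x)(A)\,\ud\mu(x)$. Assumption (ii) gives $\|K\mu\|_{\TV}\le \sup_x\|\eta(x)\|_{\TV}\|\mu\|_{\TV}$ on the positive cone; for positive measures the flat and TV norms of $\eta(x)$ are comparable through the test function $\phi\equiv1$. For the flat norm, $\int\phi\,\ud K\mu=\int \tilde\phi\,\ud\mu$ with $\tilde\phi(x)=\int\phi\,\ud\eta(x)$, and $\|\tilde\phi\|_{\BL}\le\|\eta\|_{\BLN}\|\phi\|_{\BL}$. Hence $K$ is bounded and positive with respect to both norms. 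We then seek $\mu_t$ as the fixed point of the Duhamel map
\[
\Phi(\mu)_t=S(t)\mu_0+\int_0^t S(t-s)K\mu_s\,\ud s
\]
on $C([0,T];\mathcal M^+(\R^+))$ with the flat metric. The map preserves positivity, and the Bochner/Pettis integral is understood weakly against $\BL$ functions. A priori TV bounds $\|\mu_t\|_{\TV}\le e^{Ct}\|\mu_0\|_{\TV}$ hold by Grönwall. Using a weighted sup-norm $\sup_t e^{-\omega t}\|\cdot\|_{\BLN^*}$, the map $\Phi$ is a contraction, which yields a unique fixed point. The Lipschitz-in-time estimate (2) and the stability estimate (3), with $C_2(t)=e^{C t}$, follow from the corresponding bounds for $S$ and $K$ combined with Grönwall's inequality. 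The semigroup property (1) follows from uniqueness of the fixed point together with the semigroup property of $S$.

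\textbf{Equivalence with weak solutions.} This is the main obstacle: we must show that the mild solution is a weak solution in the sense of Definition~\ref{def:WeakSolution}, and that every weak solution is mild. For the forward direction, we test the Duhamel formula with $\phi\in C^1\cap\BL$ and use the identity for $S$ together with Fubini. For uniqueness, we take a weak solution $\mu$ and use the test function $\phi(s,x)=\psi$ obtained by solving the backward dual transport equation $\partial_s\phi+b\partial_x\phi-c\phi=0$ with $\phi(t,\cdot)=\varphi$. For $\varphi\in C^1\cap\BL$ this $\phi$ is $C^1\cap\BL$ because $b,c\in C^1$. Substituting it shows that $\mu$ satisfies the Duhamel identity against a dense class of test functions. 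Since $\mathcal M^+$ is separated by $C^1\cap\BL$ functions, $\mu$ equals the fixed point. Checking the required regularity of the dual solution, and justifying the density argument up to the flat closure, is where most of the technical care lies; these are the same ingredients as in \cite[Thm 2.19, Lemma 2.25]{Duell_book}, which can be adapted to this integral operator.
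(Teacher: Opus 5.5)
The paper does not prove this theorem itself. It quotes it from \cite[Thm 2.19, Lemma 2.25]{Duell_book}, which treats a comparable model with a less general recruitment operator, and from \cite[Section 3.3]{Duell_thesis}, which treats a non-autonomous model on $\R^d$. Your argument is correct and gives a self-contained proof for exactly this model. Its ingredients are:
\begin{itemize}
\item the push-forward formula for the transport--decay part;
\item the bound $\|\tilde\phi\|_{\BLN}\le\|\eta\|_{\BLN}\|\phi\|_{\BLN}$ for the recruitment operator;
\item a Banach fixed point for the Duhamel map in the flat metric;
\item uniqueness, obtained by inserting the characteristic solution of the backward adjoint problem $\partial_s\phi+b\partial_x\phi-c\phi=0$, $\phi(t,\cdot)=\varphi$, as a test function.
\end{itemize}
The uniqueness step is the same duality device the authors use in the proof of Proposition~\ref{prop:abs_continuity}. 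There the terminal datum is $\psi_\varepsilon$, and the weak formulation is restricted to $[0,t]$ via the remark after Definition~\ref{def:WeakSolution}. So your route fits the paper's own toolkit. Compared with the citation, it proves the result directly for the autonomous model on $\R^+$ with distributed recruitment and zero inflow. It avoids having to specialise the $\R^d$ theory or adapt the boundary-recruitment results, and it gives explicit constants such as $C_2(t)=e^{Ct}$.

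A few points need tightening, none of them a gap.
\begin{itemize}
\item The contraction argument needs $C([0,T];\mathcal M^+(\R^+))$ to be complete in the flat metric. This follows from the closedness of $\mathcal M^+(\R^+)$ in $E$, recalled in Section~\ref{section:problem}. The same closedness shows that the Duhamel integral is a genuine nonnegative measure: it is a flat limit of Riemann sums of a continuous $\mathcal M^+(\R^+)$-valued integrand.
\item On the cone the flat norm equals the total mass (test with $\phi\equiv1$). So your ``comparable'' TV and flat bounds are in fact identical there, which is why the a priori mass bound comes for free.
\item Assumptions (i)--(iii) put no sign on $c$. Your estimate of $|\psi_t-\phi|$ therefore needs an extra factor $e^{t\|c\|_\infty}$. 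This is harmless, since $C_1$ may depend on $t$.
\item The step you flag as the main obstacle is lighter than you suggest. All objects lie in $\mathcal M^+(\R^+)$, where $C^1_c(\R^+)$ already separates measures, so no density argument up to the flat closure $E$ is needed. The $C^1\cap\BL$ regularity of the dual solution can be read off its characteristic formula, using $b,c\in C^1\cap\BL$.
\end{itemize}
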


\begin{remark}
The constants $C_1,C_2$ appearing in Proposition \ref{existenceAgnieszka} are continuous and monotonically increasing functions of time.
\end{remark}

\noindent We conclude this section by investigating the regularity properties induced by the recruitment operator. The result below shows that, for every positive time, the solution cannot accumulate mass near the origin faster than linearly with respect to the Lebesgue measure. In particular, the Radon--Nikodym derivative at the origin remains bounded. 

\begin{proposition}
\label{prop:abs_continuity}
Let $\mu_{\bullet}$ be a measure solution to \eqref{Model} in the sense of Definition~\ref{def:WeakSolution}.
Then for every
$t\in(0,T]$ there exist $\varepsilon(t)>0$ and $C(t)>0$ such that $\mu_t([0,\varepsilon])\le
C(t)\varepsilon $ 
for all
$0<\varepsilon<\varepsilon(t)$. Consequently,
\[
\limsup_{\varepsilon\to0}
\frac{\mu_t([0,\varepsilon])}{\varepsilon}
<\infty .
\]
\end{proposition}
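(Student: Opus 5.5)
The plan is to test the weak formulation \eqref{Formulation:WeakSolution} against a function that is constant along characteristics and concentrated near the origin at the final time. Mass near $0$ at time $t$ can then only come from recruitment during a short time window before $t$. Let $X(\tau,x)$ denote the flow of $\dot X=b(X)$. Extend $b$ to a $C^1$ bounded Lipschitz positive function on $\R$ so that the flow is globally defined. Since $b>0$ is continuous, there is $\beta>0$ with $b\ge\beta$ on $[0,1]$.

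Fix $t\in(0,T]$ and set $\varepsilon(t):=\min\{1/2,\,X(t,0)/2\}$; note $X(t,0)\ge \min\{1,\beta t\}>0$. For $0<\varepsilon<\varepsilon(t)$ take a smooth $\chi_\varepsilon:\R\to[0,1]$ with $\chi_\varepsilon\equiv1$ on $[0,\varepsilon]$ and support in $(-\infty,2\varepsilon]$. Define
\[
\phi(s,x):=\chi_\varepsilon\bigl(X(t-s,x)\bigr), \qquad s\in[0,t].
\]
This function is $C^1$ and bounded Lipschitz, because $b\in C^1\cap\BL$ gives a $C^1$ flow with bounded derivative in $x$ on $[0,T]$. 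It also satisfies $\partial_s\phi+b\,\partial_x\phi=0$.

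Insert $\phi$ into \eqref{Formulation:WeakSolution} with final time $t$, which is allowed by the remark after Definition~\ref{def:WeakSolution}. Since $c\ge0$ by Assumption (v) and $\phi\ge0$, we may drop the mortality term and obtain
\[
\mu_t([0,\varepsilon])\le\int\chi_\varepsilon\,\ud\mu_t\le\int \chi_\varepsilon(X(t,x))\,\ud\mu_0(x)+\int_0^t\!\!\int\Bigl(\int\phi(s,y)\,\ud[\eta(x)](y)\Bigr)\ud\mu_s(x)\,\ud s .
\]
The first term vanishes, because $X(t,x)\ge X(t,0)>2\varepsilon$ for all $x\ge0$, since the flow is monotone in $x$.

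For the recruitment term, observe that for $y\ge0$ we have $\phi(s,y)\ne0$ only if $X(t-s,y)\le2\varepsilon\le1$. Along such a trajectory starting at $y\ge0$ the speed is at least $\beta$, so $\beta(t-s)\le 2\varepsilon$. Hence the integrand vanishes unless $s\in[t-2\varepsilon/\beta,t]$. On that window, $0\le\phi\le1$ bounds the inner integral by $\eta(x)(\R^+)$. For positive measures the total mass equals the flat norm: testing with $\phi\equiv1$ gives one inequality, and $\int\phi\,\ud\nu\le\|\phi\|_\infty\nu(\R^+)$ gives the other. Thus $\eta(x)(\R^+)\le\|\eta\|_{\BLN}$ by Assumption (ii). It remains to bound $\sup_{s\le T}\mu_s(\R^+)$. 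Testing \eqref{Formulation:WeakSolution} with $\phi\equiv1$ and dropping $-c\le0$ gives $\mu_s(\R^+)\le\mu_0(\R^+)+\|\eta\|_{\BLN}\int_0^s\mu_r(\R^+)\,\ud r$, so Gronwall yields $\mu_s(\R^+)\le e^{\|\eta\|_{\BLN}T}\mu_0(\R^+)$. Altogether,
\[
\mu_t([0,\varepsilon])\le \frac{2\varepsilon}{\beta}\,\|\eta\|_{\BLN}\,e^{\|\eta\|_{\BLN}T}\mu_0(\R^+)=:C(t)\,\varepsilon ,
\]
and the $\limsup$ statement follows immediately.

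The main obstacle I expect is the technical justification of the test function. It is defined only on $[0,t]$, whereas the weak formulation is stated for functions in $(\Cu{1}\cap\BL)(\R^+\times\R^+)$, so one must extend it to all times or use the time-$s$ version of the formulation. One must also check that the extension of $b$ to negative $x$ does not matter. It does not, since only values $y\ge0$ enter the integrals, and forward characteristics from $y\ge0$ stay in $\R^+$, which is consistent with the no-inflow boundary condition. If the direct test-function argument caused trouble, the fallback would be to write the Duhamel (mild) representation $\mu_t=$ push-forward of $\mu_0$ with decay plus the time integral of transported births. One would then verify that this representation satisfies Definition~\ref{def:WeakSolution}, identify it with $\mu_t$ via uniqueness in Theorem~\ref{existenceAgnieszka}, and read off the same estimate.
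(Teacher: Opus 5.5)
Your proposal is correct and follows essentially the same route as the paper. Both arguments test the weak formulation with a cut-off $\chi_\varepsilon$ transported backward along characteristics from time $t$, show that this test function vanishes on $\R^+$ outside a time window of length $2\varepsilon/\beta$, and bound the remaining recruitment term by $\|\eta\|_{\BLN}$ times a uniform bound on the total mass. The differences are minor: you drop the mortality term using $c\ge 0$ instead of building the decay factor into the test function, you obtain the mass bound by Gronwall rather than from narrow continuity, and your localisation argument (the trajectory must end in $[0,2\varepsilon]\subset[0,1]$, where $b\ge\beta$) avoids the paper's use of $b'\le 0$.
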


\begin{proof}
Fix $t>0$. We show that there exist $\varepsilon(t)>0$ and $C(t)>0$ such that
\[
\mu_t([0,\varepsilon])\le C(t)\,\varepsilon
\]
for all $0<\varepsilon<\varepsilon(t)$.

\noindent By Assumptions~\ref{Assum} $b$ is continuous and strictly positive, and thus
for $\delta_1=\frac 1 2 b(0)>0$ there exists $\widetilde\varepsilon_1>0$  with  $b(x)\ge \delta_1$
 for all $x\in[0,\widetilde\varepsilon_1]$. 
Similarly, for $\tilde \varepsilon_2:=\tilde \varepsilon_1+tb(0)$, we can choose $\delta_2>0$ so that 
    \begin{align}
    \label{delta_2_bound}
        b(x)\geq \delta_2\qquad\text{for all }x\in[0,\widetilde\varepsilon_2].
    \end{align}
As $b'\leq 0$ by Assumptions~\ref{Assum}, $\delta_2>\delta_1$. Choose 
\[
\varepsilon(t)
<
\frac12\min\{\widetilde\varepsilon_1,\delta_2 t\}.
\]
\noindent Now let $0<\varepsilon<\varepsilon(t)$ and choose $\psi_\varepsilon\in BL(\R^+)\cap C^1(\R^+)$ satisfying
\begin{align*}
0\le \psi_\varepsilon\le 1,\qquad
\psi_\varepsilon\equiv 1 \ \text{on }[0,\varepsilon],
\qquad
\operatorname{supp}\psi_\varepsilon\subset[0,2\varepsilon].
\end{align*}
Then
\begin{align}
\label{estimate_mu_t}
\mu_t([0,\varepsilon])
\le
\int_{\R^+}\psi_\varepsilon(x)\,\ud\mu_t(x).
\end{align}

\noindent Let $\tilde \varphi_{\varepsilon,t}$ denote the solution of the adjoint
transport-decay problem
\begin{align}
\label{adjusted_dual_problem}
\left\{
\begin{array}{lll}
\partial_{\tau}\tilde \varphi_{\varepsilon,t}
+b\,\partial_x\tilde \varphi_{\varepsilon,t}
-c\,\tilde \varphi_{\varepsilon,t}
=0,
& \text{in } [0,t]\times\R^+,\\[1mm]
\tilde \varphi_{\varepsilon,t}(t,\cdot)
=\psi_\varepsilon,
& \text{in } \R^+.
\end{array}
\right.
\end{align}
It holds that
\[
\tilde \varphi_{\varepsilon,t}\in
C^1([0,t]\times\R^+)\cap BL([0,t]\times\R^+).
\]

\noindent For $\tau\in [0,t]$ the method of characteristics implies
\[
\tilde \varphi_{\varepsilon,t}(\tau,x)
=
\psi_\varepsilon(X_b(t-\tau,x))
\exp\!\left(
-\int_\tau^t
c(X_b(s-\tau,x))\,\ud s
\right),
\]
where $X_b$ denotes the flow of the vector field $b$, i.e. it solves the ODE
	\begin{align}
    \label{flow}
	\partial_s X_b(s,x) = b(X_b(s, x)) \quad \quad X_b(0, x) = x.
	\end{align}
We estimate
\begin{align}
\label{estimate_tilde_varphi}
0\le
\tilde \varphi_{\varepsilon,t}(\tau,x)
\le
e^{t\|c\|_\infty}
\mathbf 1_{\{X_b(t-\tau,x)\in[0,2\varepsilon]\}}.
\end{align}

\noindent
Using \eqref{flow} we compute
\begin{align}
\label{flow_equation}
X_b(t-\tau,x)=x+\int_0^{t-\tau}b(X_b(s,x))\,\ud s.
\end{align}
Now if $x\geq\tilde \varepsilon_1$, then the strict positivity of $b$ together with \eqref{flow_equation} implies 
    \begin{align}
    \label{estimate_above_eps_1}
        X_b(t-\tau,x)\geq x\geq\tilde \varepsilon_1>2\varepsilon\qquad \forall x\in [\tilde\varepsilon_1,\infty).
    \end{align}
To show a similar bound if  $x\in[0,\tilde\varepsilon_1)$, note that  \eqref{flow_equation}  together with $b'\leq 0$ implies 
\begin{align*}
    X_b(s,x)\leq x+sb(0)\leq \tilde \varepsilon_1+tb(0)=\tilde \varepsilon_2\qquad \forall \,0\leq s\leq t.
    \end{align*}
Thus, for all $\tau\leq t-\frac {2\varepsilon}{\delta_2}$ we compute with \eqref{flow_equation} and \eqref{delta_2_bound}
\begin{align}
\label{estimate_below_eps_1}
X_b(t-\tau,x)\ge x+\delta_2(t-\tau)>2\varepsilon\qquad \forall x\in [0,\tilde \varepsilon_1).
\end{align}
Combining  \eqref{estimate_above_eps_1} and \eqref{estimate_below_eps_1} with the estimate \eqref{estimate_tilde_varphi} yields
    \begin{align*} 
\tilde \varphi_{\varepsilon,t}(\tau,\cdot)\equiv0 \qquad \forall \tau\leq t-\frac {2\varepsilon} {\delta_2}.
    \end{align*}
\noindent Using $\tilde \varphi_{\varepsilon,t}$ as a test function in the weak
formulation \eqref{Formulation:WeakSolution} with the upper time boundary replaced by $t$  yields
    \begin{align*}
&\int_{\R^+}\psi_\varepsilon(x)\,\ud\mu_t(x)
=\int_{\R^+}\tilde\varphi_{\varepsilon,t}(t,x)\,\ud\mu_t(x)\\
=&\int_0^t\tilde\varphi_{\varepsilon,t}(0,x)\,\ud\mu_0(x)+
\int_0^t
\int_{\R^+}
\left(
\int_{\R^+}
\tilde \varphi_{\varepsilon,t}(\tau,y)\,
\ud[\eta(x)](y)
\right)
\ud\mu_\tau(x)\,\ud\tau
\end{align*}
as the transport-decay terms cancel
by construction of $\tilde \varphi_{\varepsilon,t}$. Since $\tilde \varphi_{\varepsilon,t}(\tau,\cdot)\equiv0$ for $\tau\leq t-\frac {2\varepsilon}{\delta_2}$ this reduces to
\[
\int_{\R^+}\psi_\varepsilon(x)\,\ud\mu_t(x)
=
\int_{t-\frac{2\varepsilon}{\delta_2}}^t
\int_{\R^+}
\left(
\int_{\R^+}
\tilde \varphi_{\varepsilon,t}(\tau,y)\,
\ud[\eta(x)](y)
\right)
\ud\mu_\tau(x)\,\ud\tau.
\]

\noindent Using the boundedness of $\tilde \varphi_{\varepsilon,t}$ we obtain
\[
\int_{\R^+}\psi_\varepsilon(x)\,\ud\mu_t(x)
\le
e^{t\|c\|_\infty}\|\eta\|_{\BLN}
\int_{t-\frac{2\varepsilon}{\delta_2}}^t
\|\mu_\tau\|_{\TV}\,\ud\tau.
\]
The narrow continuity of $\tau\mapsto\mu_\tau$ on $[0,t]$ gives the bound
\[
R_t:=
\sup_{\tau\in[0,t]}
\|\mu_\tau\|_{\TV}
<\infty.
\]
Together with estimate \eqref{estimate_mu_t} we conclude 
\[
\mu_t([0,\varepsilon])
\le
\int_{\R^+}\psi_\varepsilon(x)\,\ud\mu_t(x)
\le
\frac{2}{\delta_2}
e^{t\|c\|_\infty}
M_\eta R_t\,\varepsilon=:C(t)\varepsilon.
\]

\end{proof}

\begin{remark}
Although the estimate above controls the growth of mass near the origin, it does not imply local absolute continuity of $\mu_t$. In fact, Assumptions~\ref{Assum}~(i)--(iii) allow for purely atomic recruitment kernels. This should be contrasted with the boundary recruitment model considered in \cite[Lemma 2.20]{Duell_book}, where $\eta(x)=a(x)\delta_0$. In that setting, the time of recruitment parametrises the position along characteristics, which yields a genuine regularising effect and local absolute continuity near the origin.

\noindent For the present distributed recruitment model such a conclusion cannot be expected in general. Indeed, kernels of the form
$\eta(x)=\delta_{(x-h)_+}$, 
preserve atomic components of the solution and therefore do not exhibit the same regularising mechanism.
\end{remark}

\begin{remark}
After a careful inspection of the proof of Proposition \ref{prop:abs_continuity} we can relax the assumption on the strict positivity of $b$ as we only need that the support of $b$ should be large enough that $b$ has no zeros in the scope of the model. So if we assume $b(0)>0$ and that the first zero $N\in \R^+\cup\{\infty\}$ of $b$ satisfies $N>Tb(0)$, then we can choose  $\tilde\varepsilon_1\in (0,N)$ so small that 
    \begin{align*}
    \tilde\varepsilon_2:=\tilde\varepsilon_1+Tb(0)<N.
    \end{align*}
After selecting the corresponding $\delta_1$ and $\delta_2$ the rest of the proof follows the same lines. \\
Note that the other results in this paper only require $b(0)>0$, so that this more general assumption does not affect their validity. 
\end{remark}

\section{Main Results}
\label{section:main_result}
\noindent
In this section we formulate the results of this paper concerning  semigroup properties and  asymptotic behaviour of the model solutions.
\noindent First, we show that the semigroup $\mathcal{T}(t)$ can be extended to the whole time interval $[0,\infty)$ due to the arbitrary choice of $T<\infty$.

\begin{proposition}\label{stronglycontinuous}
Under Assumptions \ref{Assum}(i)-(iii) there exists a unique solution\\
$\mu_{\bullet} \colon [0,T] \to
\mathcal{M}^+(\R^+)$ of the weak formulation \eqref{Formulation:WeakSolution}, which coincides with a trajectory of a strongly continuous semigroup $\mathcal{T}(t)$ on the Banach space $E=\overline{\mathcal{M}(\mathbb{R}^+)}^{||\cdot ||_{\BLN^*}}$, defined for all $t\in \mathbb{R}^+$.
\end{proposition}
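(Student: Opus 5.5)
We combine Theorem~\ref{existenceAgnieszka} on $[0,T]$ with uniqueness, linearity and a density argument.

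\emph{Extension to all times.} For $T_1<T_2$, uniqueness in Theorem~\ref{existenceAgnieszka}(2) shows that the semigroup built on $[0,T_2]$, restricted to $[0,T_1]$, equals the one built on $[0,T_1]$: both trajectories solve \eqref{Formulation:WeakSolution} on $[0,T_1]$ with the same initial datum, using the Remark that the weak formulation holds with $T$ replaced by any $s\in[0,T]$. Hence $\mathcal{T}(t)$ is consistently defined for all $t\ge0$. For arbitrary $t_1,t_2\ge0$ we take $T>t_1+t_2$, so the semigroup law $\mathcal{T}(t_1)\mathcal{T}(t_2)=\mathcal{T}(t_1+t_2)$ follows from Theorem~\ref{existenceAgnieszka}(1). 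The weak formulation is linear and its coefficients do not depend on $\mu$, so the solution map is additive and positively homogeneous on $\mathcal{M}^+(\R^+)$.

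\emph{Extension to $E$.} Write $\mu\in\mathcal{M}(\R^+)$ as $\mu=\mu^+-\mu^-$ and set $\mathcal{T}(t)\mu:=\mathcal{T}(t)\mu^+-\mathcal{T}(t)\mu^-$. This is well defined: if $\mu_1-\mu_2=\nu_1-\nu_2$ with nonnegative measures, then $\mu_1+\nu_2=\nu_1+\mu_2$, and additivity on the cone gives $\mathcal{T}(t)\mu_1-\mathcal{T}(t)\mu_2=\mathcal{T}(t)\nu_1-\mathcal{T}(t)\nu_2$. The resulting map is linear on $\mathcal{M}(\R^+)$.

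The main obstacle is boundedness in $\|\cdot\|_{\BLN^*}$ on signed measures. Theorem~\ref{existenceAgnieszka}(3) only compares two positive measures, while $\|\mu^+-\mu^-\|_{\BLN^*}$ does not control the flat norms of $\mu^\pm$ individually. Even so, (3) gives exactly what is needed. Take $\mu_1=\mu^+$ and $\mu_2=\mu^-$; then
\[
\|\mathcal{T}(t)\mu\|_{\BLN^*}=\|\mathcal{T}(t)\mu^+-\mathcal{T}(t)\mu^-\|_{\BLN^*}\le C_2(t)\|\mu^+-\mu^-\|_{\BLN^*}=C_2(t)\|\mu\|_{\BLN^*}.
\]
If one prefers an independent check, duality works: $\int\phi\,\ud(\mathcal{T}(t)\mu)=\int\mathcal{T}^*(t)\phi\,\ud\mu$, where $\mathcal{T}^*(t)\phi$ solves the backward adjoint problem. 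The latter is a transport-decay equation plus a Lipschitz integral term, and a Gronwall estimate bounds $\|\mathcal{T}^*(t)\phi\|_{\BL}$ using Assumptions~\ref{Assum}(i)--(iii). Either way, $\mathcal{T}(t)$ extends uniquely by density to a bounded operator on $E$ with $\|\mathcal{T}(t)\|\le C_2(t)$. The operators stay locally uniformly bounded because $C_2$ is monotone. The semigroup law passes to $E$ by continuity.

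\emph{Strong continuity.} Local uniform boundedness reduces strong continuity at $t=0$ to a dense subset of $E$. For $\mu\in\mathcal{M}(\R^+)$, apply Theorem~\ref{existenceAgnieszka}(2) to $\mu^\pm$:
\[
\|\mathcal{T}(t)\mu-\mu\|_{\BLN^*}\le C_1(t)\,t\,(\|\mu^+\|_{\TV}+\|\mu^-\|_{\TV})\to0 \quad\text{as } t\to0.
\]
Then a standard $\varepsilon/3$ argument gives $\mathcal{T}(t)x\to x$ for every $x\in E$. Together with the semigroup property, strong continuity at $0$ yields strong continuity on $\R^+$. Finally, for $\mu_0\in\mathcal{M}^+(\R^+)$ the trajectory $t\mapsto\mathcal{T}(t)\mu_0$ coincides on each $[0,T]$ with the unique measure solution of Theorem~\ref{existenceAgnieszka}(2).
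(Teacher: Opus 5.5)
Your proposal is correct and follows essentially the same route as the paper. Both arguments extend to all times by uniqueness, pass to signed measures via the Hahn--Jordan decomposition and linearity, extend to $E$ by density using the Lipschitz bound (3), and prove strong continuity by approximating with measures and combining (2) and (3). You are more explicit than the paper on two points it leaves implicit: that the Hahn--Jordan extension is well defined, and that applying (3) to $\mu^+,\mu^-$ gives $\|\mathcal{T}(t)\mu\|_{\BLN^*}\le C_2(t)\|\mu\|_{\BLN^*}$ for signed $\mu$, which the paper uses when it applies (3) to $\mu_0-\mu_0^{\varepsilon}$.
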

\begin{proof}
The assertion for the positive cone $\mathcal{M}^+(\R^+)$ follows from Proposition \ref{existenceAgnieszka} and from the observation that, in fact, the semigroup $\mathcal{T}(t)$ can be defined on the whole interval $[0,\infty)$, due to the arbitrary choice of $T<\infty$. More precisely, any two semigroups $\mathcal{T}^{T_1}(t)$ and $\mathcal{T}^{T_2}(t)$ defined for $t\in [0,T_1]$ and $t\in [0,T_2]$, respectively,   coincide on the interval $[0, \min\{T_1,T_2\}]$ by uniqueness as the corresponding solutions both satisfy the weak formulation. Hence, the semigroup $\mathcal{T}(t)$ of solutions can be extended in time to the whole interval $[0,\infty)$ while preserving  the Lipschitz continuity with respect to time and initial data. To account for initial data in the whole state space $\mathcal M(\R^+)$, we apply the Hahn-Jordan decomposition of a measure into its negative and positive part, and use the linearity of the problem.  In a last step, the Lipschitz operator is extended to the closure $\overline{\mathcal{M}(\mathbb{R}^+)}^{||\cdot ||_{\BLN^*}}=E$, see \cite[Thm. 2.6]{Amman}.\\
Next, we prove that $\mathcal{T}(t)$ defines a strongly continuous semigroup on the whole state space $E$. To this end, let $\varepsilon>0 $ and take an approximation of $\mu_0 \in E$ by  measures $\mu_0^{\varepsilon}\in \mathcal{M}(\R^+)$, see e.g. \cite[Theorem 8.1]{M3AS_measures}, such that
    \begin{align*}
 \norma{\mu_0 - \mu_0^{\varepsilon}}_{{\BLN}^*}<\frac{\varepsilon}{2(C_2(1)+1)},
 \end{align*}
with $C_2$ given in  Proposition \ref{existenceAgnieszka}. We compute 
\begin{eqnarray*}
 \norma{\mathcal{T}(t)\,\mu_0 - \mu_0}_{{\BLN}^*} \leq  \norma{\mathcal{T}(t)\,\mu_0 - \mathcal{T}(t)\,\mu_0^{\varepsilon}}_{{\BLN}^*} + \norma{\mathcal{T}(t)\,\mu_0^{\varepsilon} - \mu_0^{\varepsilon}}_{{\BLN}^*}+ \norma{\mu_0 - \mu_0^{\varepsilon}}_{{\BLN}^*}. 
\end{eqnarray*}
Applying the estimates (2) and (3) from Proposition \ref{existenceAgnieszka} and setting 
\begin{align*}
\delta= \min\left\{\frac{\varepsilon}{2\,C_1(1)\,\norma{\mu_0^{\varepsilon}}_{\TV}},1\right\}
\end{align*}
yields for all $t<\delta$ 
    \begin{align*}
    \norma{\mathcal{T}(t)\,\mu_0 - \mu_0}_{{\BLN}^*}< \varepsilon,
    \end{align*}
i.e. strong continuity on $E$.
\end{proof}

\begin{remark}
Although the weak formulation \eqref{Formulation:WeakSolution} is only well-defined for measures $\mu\in\mathcal{M}(\R^+)$, the semigroup $\mathcal{T}(t)$ admits a continuous extension to $E$. This extension is obtained by approximating elements of $E$ by measures and passing to the limit of the corresponding solutions in the $\BLN^*$ topology. For the purpose of this paper, however, it suffices to consider initial data in $\mathcal{M}^+(\R^+)$. Since this cone is closed with respect to the $\BLN^*$ norm (see Remark \ref{remark:measure_space}(ii)), solutions starting in $\mathcal{M}^+(\R^+)$ remain in the cone for all times and therefore satisfy the weak formulation \eqref{Formulation:WeakSolution}.
 \end{remark}

\noindent As we want to take advantage of the semigroup methods, we rewrite model \eqref{Model} to an abstract Cauchy problem on the separable state space $E=\overline{\mathcal{M}(\mathbb{R}^+)}^{||\cdot ||_{\BLN^*}}$:
    \begin{align}\label{Cauchy}
    \left\{
    \begin{array}{lll}
    \frac{\ud \mu_t}{\ud t}&=\left(\mathcal{B}+\mathcal{C}+\mathcal{N}\right)\mu_t,&(t,x)\in\R^+\times\R^+\\
    b(0)D_{\lambda}\mu_t(0)&= 0,&t\in \R^+\\
    \mu_{t=0}&=\mu_0,
    \end{array}
    \right.
    \end{align}
where we define
\begin{align}
\mathcal{B}\,\mu&=-\frac{\partial}{\partial x}\left(b\,\mu\right),  \hspace{15mm}  \label{Adef} \\
\mathcal{C}\,\mu&=-c\,\mu,  \\
\mathcal{N}\,\mu&=\int_{\mathbb{R}^+}(\eta(x))(\cdot)\,\ud \mu(x).\label{Cdef}
\end{align}

\noindent We clarify the operators.
\begin{proposition}
\label{prop:N_well-posed}
Let $\mathcal{N}:\mathcal{M}(\R^+)\to\mathcal{M}(\R^+)$ be the linear operator introduced in \eqref{Cdef}. Then the following statements hold.
    \begin{itemize}
        \item [i)] For $\mu\in \mathcal{M}(\R^+)$ the integral in \eqref{Cdef} is a well-defined Bochner integral on $E$.
        \item[ii)] $\mathcal{N}$ is a positive operator, i.e. it maps $\mathcal{M}^+(\R^+)$ into $\mathcal{M}^+(\R^+)$. Furthermore, $\mathcal{N}\mathcal{M}(\R^+)\subset\mathcal{M}(\R^+)$.
        \item [iii)] We can extend $\mathcal{N}$ to a well-defined operator $E\to E$.
    \end{itemize}
\end{proposition}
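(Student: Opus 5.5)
For i), I will regard $\eta$ as a map $x\mapsto\eta(x)$ from $\R^+$ into $E$. By Assumptions \ref{Assum}(ii) this map is bounded and Lipschitz with respect to $\|\cdot\|_{\BLN^*}$, so it is continuous. Since $E$ is separable (it is the closure of $\mathcal{M}(\R^+)$, and finite combinations of Dirac masses at rational points with rational weights are dense), the map is strongly measurable by Pettis' theorem. For $\mu\in\mathcal{M}(\R^+)$ with Hahn--Jordan decomposition $\mu=\mu^+-\mu^-$, we have
\[
\int_{\R^+}\|\eta(x)\|_{\BLN^*}\,\ud|\mu|(x)\le \sup_x\|\eta(x)\|_{\BLN^*}\,\|\mu\|_{\TV}<\infty.
\]
Hence $\mathcal{N}\mu=\int_{\R^+}\eta(x)\,\ud\mu^+(x)-\int_{\R^+}\eta(x)\,\ud\mu^-(x)$ is a well-defined Bochner integral in $E$. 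Boundedness is needed here because $|\mu|$ is a finite measure.

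For ii), I will identify the Bochner integral with a measure. For any $\phi\in\BL(\R^+)=E^*$, bounded functionals commute with Bochner integrals, which gives
\[
\langle \mathcal{N}\mu,\phi\rangle=\int_{\R^+}\Big(\int_{\R^+}\phi(y)\,\ud[\eta(x)](y)\Big)\ud\mu(x).
\]
I then define the set function $\nu(A):=\int_{\R^+}\eta(x)(A)\,\ud\mu(x)$ for $A\in\mathcal{B}(\R^+)$. To see that $x\mapsto\eta(x)(A)$ is Borel, I will use narrow continuity of $\eta$: the map is continuous for each open $A$ when tested against continuous bounded functions, and a monotone class argument extends measurability to all Borel sets. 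The function is bounded by $\sup_x\eta(x)(\R^+)=\sup_x\|\eta(x)\|_{\BLN^*}$, using positivity of $\eta(x)$ and testing with $\phi\equiv1$. Countable additivity of $\nu$ follows from monotone convergence, so $\nu\in\mathcal{M}(\R^+)$, and $\nu\ge0$ whenever $\mu\ge0$ because each $\eta(x)\ge0$. Finally, $\nu$ and $\mathcal{N}\mu$ define the same functional on $\BL(\R^+)$, since the integral identity holds for simple $\phi$ and extends by approximation. As $E^*=\BL$ separates points, $\mathcal{N}\mu=\nu$. This identification is the step I expect to be the main obstacle: the Bochner integral lives a priori only in the completion $E$, and one must show it actually lands in $\mathcal{M}(\R^+)$.

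For iii), the key is a bound of $\|\mathcal{N}\mu\|_{\BLN^*}$ by the flat norm rather than the total variation norm. For $\|\phi\|_{\BL}\le1$, set $\psi(x):=\int_{\R^+}\phi\,\ud[\eta(x)]$. Then $|\psi(x)|\le\sup_x\|\eta(x)\|_{\BLN^*}$, and
\[
|\psi(x)-\psi(x')|\le\|\eta(x)-\eta(x')\|_{\BLN^*}\le \Lip(\eta)|x-x'|.
\]
Hence $\|\psi\|_{\BL}\le\|\eta\|_{\BLN}$ and
\[
\|\mathcal{N}\mu\|_{\BLN^*}=\sup_{\|\phi\|_{\BL}\le1}\int_{\R^+}\psi\,\ud\mu\le\|\eta\|_{\BLN}\|\mu\|_{\BLN^*}.
\]
So $\mathcal{N}$ is a bounded linear operator on the dense subspace $\mathcal{M}(\R^+)$ of $E$, and it extends uniquely by continuity to a bounded operator $E\to E$ with the same norm bound.
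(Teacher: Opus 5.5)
Your proof is correct. Parts i) and iii) follow the paper's argument: Pettis' theorem plus an integrable norm bound, and then the estimate $\|\mathcal{N}\mu\|_{\BLN^*}\le\|\eta\|_{\BLN}\|\mu\|_{\BLN^*}$, which holds because $x\mapsto\int\phi\,\ud[\eta(x)]$ is bounded Lipschitz with norm at most $\|\eta\|_{\BLN}$. You add two small refinements. Your integrability bound is taken with respect to $|\mu|$ and $\|\mu\|_{\TV}$, which is the quantity Bochner integrability actually requires for signed $\mu$. And phrasing iii) as continuous extension of a bounded operator from a dense subspace makes independence of the approximating sequence automatic, which the paper's Cauchy-sequence argument leaves implicit.

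The genuine difference is in ii). The paper never constructs a measure. It uses $\mathcal{N}\mu\in E$ from i) and checks that $\langle\mathcal{N}\mu,\psi\rangle=\int\langle\eta(x),\psi\rangle\,\ud\mu(x)\ge0$ for $\psi\ge0$. It then invokes the structural fact $E\cap \BL(\R^+)^*_+=\mathcal{M}^+(\R^+)$ from the monograph to conclude $\mathcal{N}\mu\in\mathcal{M}^+(\R^+)$, and handles signed $\mu$ afterwards by Hahn--Jordan. That route is shorter but rests on a nontrivial characterisation of the positive elements of the completion.

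Your route builds the mixture measure $\nu(A)=\int\eta(x)(A)\,\ud\mu(x)$ directly and identifies it with the Bochner integral through $E^*=\BL$. It is self-contained, treats signed $\mu$ in one step, and yields an explicit formula for $\mathcal{N}\mu$. That formula is what one implicitly uses when reasoning about supports in Proposition \ref{irreducible}. The price is the measurability argument for $x\mapsto\eta(x)(A)$.

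One small correction there: for open $A$ this map is not continuous, only lower semicontinuous. It is the supremum of the continuous maps $x\mapsto\int\phi_n\,\ud[\eta(x)]$ with continuous $\phi_n\uparrow\mathbf 1_A$. That still gives Borel measurability, so your $\pi$--$\lambda$ (monotone class) step goes through as stated.
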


\begin{proof}
To show i) consider $\mu\in \mathcal{M}(\R^+)$. The space $E$ is a separable Banach space, so  according to Pettis Theorem \cite[Theorem, Section V, \S 4]{Yosida},  the concept of strong measurability coincides with the easier to handle weak measurability. Consequently, it is sufficient to check that the map $   \mathbb{R}^+\ni \mapsto\eta(x)\in E$ is weakly measurable, i.e. for all $f\in E^*=BL(\R^+)$ (cf. \cite[Thm 1.41]{Duell_book}), 
    \begin{align*}
    \R^+\ni x\mapsto \langle \eta(x),f\rangle_{E,E^*}
    \end{align*}
is measurable. Here, $\langle\cdot,\cdot\rangle_{E,E^*}$ denotes the usual dual pairing. As $\eta$ is assumed to be in $BL(\R^+; \mathcal M^+(\R^+))$, this map is not only measurable but even Lipschitz continuous since for all $x_1, x_2 \in \R^+$
	\begin{align*}
		\big|\big\langle \eta(x_1) - \eta(x_2),f\big\rangle_{E,E^*}\big| \leq& \|f\|_{E^*}  \left\|\eta(x_1)- \eta(x_2)\right\|_{BL(\R^+)^*}
		 \leq&  \|f\|_{E^*}  |\eta|_{\Lip}\, |x_1-x_2|. 
	\end{align*}
In particular, the map $\R^+\ni x\mapsto \eta(x)$ is strongly measurable. Furthermore, 
    \begin{align*}
    \int_{\R^+}\|\eta(y)\|_{\BLN^*}\, \ud \mu(y)\leq \|\eta\|_{\BLN}\|\mu\|_{\BLN^*}<\infty,
    \end{align*}
so that according to \cite[Theorem 1, Section V \S 5]{Yosida} the map $x \mapsto \eta(x)$ is $\mu$-integrable and $\int_{\R^+} (\eta (y))(\cdot)  \ud \mu(y)$ is a well defined Bochner integral on $E$.\\
In order to show $\mathcal{N}\mu\in \mathcal{M}^+(\R^+)$ for $\mu\in \mathcal{M}^+(\R^+)$, we use the characterisation of $\mathcal{M}^+(\R^+)$ given by Theorem 1.71 in \cite{Duell_book}, i.e.
    \begin{align*}
    E\cap BL(\R^+)^*_+=\mathcal{M}^+(\R^+),
    \end{align*}
where 
    \begin{align*}
        BL(\R^+)^*_+:=\{T\in BL(\R^+)^*\mid T(\psi)\geq 0 \text{ for all } \psi\in BL(\R^+), \psi\geq 0\}
    \end{align*}
denotes the space of positive linear functionals on $BL(\R^+)$. As we already know $\mathcal{N}\mu\in E$ by i), it remains to verify  $\mathcal{N}\mu\in BL(\R^+)^*_+$. Let $\psi\in BL(\R^*)$. Bochner integrals commute with bounded operators (cf. \cite[Corollary 2, Section V §5]{Yosida}),  so for all $\nu\in \mathcal{M}^+(\R^+)$
    \begin{align*}
    \langle \mathcal{N}\nu,\psi\rangle_{E,E^*}=\int_{\R^+}\langle \eta(x),\psi\rangle_{E,E^*}\ud\nu(x)\geq 0;
    \end{align*}
i.e.  $\mathcal{N}\mu\in E\cap BL(\R^+)^*_+=\mathcal{M}^+(\R^+)$, proving (ii). The second part of the statement follows directly from the Hahn-Jordan decomposition Theorem.\\
To show iii), let $\mu\in E$  with approximating sequence $(\mu^n)_{n\in \mathbb{N}}\subset \mathcal{M}(\R^+)$. Define
    \begin{align}
    \label{extension_of_C}
\mathcal{N}\mu:=\lim_{n\to\infty}\mathcal{N}\mu^n\in E.
    \end{align}
We show that the limit in \eqref{extension_of_C} exists by proving that the sequence $(\mathcal{N}\mu^n)_{n\in \N}$ is Cauchy. Indeed, let $\varepsilon>0$, $\|\psi\|_{\BLN}\leq  1$ and let $N\in \N$ with $\|\mu^n-\mu^m\|_{\BLN^*}< \varepsilon/\|\eta\|_{\BLN}$ for all $n,m\geq N$. Using again that Bochner integrals commute with linear operators we compute
    \begin{align}
    \label{C_Cauchy}
    \begin{split}
\int_{\R^+}\psi(x)\,\ud\left[\mathcal{N}\mu^n-\mathcal{N}\mu^m\right](x)
=&\int_{\R^+}\psi(x)\,\ud\left[ \int_{\R^+}\eta(y)\,\ud(\mu^n-\mu^m)(y)\right](x)\\
=&\int_{\R^+}\int_{\R^+}\psi(x)\,\ud [\eta(y)](x)\,\ud(\mu^n-\mu^m)(y)\\
\leq&\|\psi\|_{\BLN}\|\eta\|_{\BLN}\|\mu^n-\mu^m\|_{\BLN^*}\\
=&\|\eta\|_{\BLN}\|\mu^n-\mu^m\|_{\BLN^*}<\varepsilon.
\end{split}
    \end{align}
Taking the supremum over all such $\psi$ yields $\|\mathcal{N}\mu^n-\mathcal{N}\mu^m\|_{\BLN^*}<\varepsilon$, i.e. the sequence $(\mathcal{N}\mu^n)_{n\in \N}$ is indeed Cauchy and the limit exists in $E$. 
\end{proof}

\begin{remark}
\label{rem:transport_for_measures}
As measures are not strongly differentiable, operator $\mathcal{B}$ only makes sense in a weak setting. In particular, for $\mu\in \mathcal{M}(\R^+)$ the measure  $\mathcal{B}\mu$ is defined via partial integration, i.e. for $\varphi\in BL(\R^+)\cap C^1(\R^+)\subset E^*$ 
    \begin{align*}
    \langle \mathcal{B} \mu,\varphi\rangle_{E,E^*}= \int_{\R^+}\varphi(x)\ud[\mathcal{B}\mu](x):=\int_{\R^+}b(x)\partial_x\varphi(x)\ud \mu(x).
    \end{align*}
Using an approximation argument as in the proof of Proposition \ref{prop:N_well-posed}, we can extend $\mathcal{B}$ to an operator $E\to E$.
\end{remark}

\noindent
\begin{remark}
\begin{itemize}
    \item [i)]
According to Proposition \ref{stronglycontinuous} the semigroup of solutions $\mathcal{T}(t):E\to E$ to \eqref{Formulation:WeakSolution} is strongly continuous and thus there exists a unique generator of the form 
    \begin{align*}
    \mathcal{A}=\mathcal{B}+\mathcal{C}+\mathcal{N}
    \end{align*}
with domain $D(\mathcal{A})\subset E$, see \cite[Thm 1.4, Chapter II]{EngelNagel}. Since $\mathcal{T}(t)\mu_0$ solves the weak formulation \eqref{Formulation:WeakSolution} for an initial measure $\mu_0\in \mathcal{M}(\R^+)$, the map $t\mapsto \mathcal{T}(t)\mu_0$ is differentiable so that we conclude $\mathcal{M}(\R^+)\subset D(\mathcal{A})$. 
\item [ii)]  \label{rem:positivity} Theorem \ref{existenceAgnieszka} implies that $\mathcal{T}(t):\mathcal{M}^+(\R^+)\to \mathcal{M}^+(\R^+)$ for all $t\in [0,T]$, so  $\mathcal{T}$ defines a positive semigroup.
\item [iii)] The operators $\mathcal{C},\mathcal{N}$ are bounded and thus the generated semigroups $\mathcal{T}_{\mathcal{C}}(t)$ and $\mathcal{T}_{\mathcal{N}}(t)$ are strongly continuous, see \cite[Prop. 3.5., Chapter I]{EngelNagel}. Strong continuity of the semigroup $\mathcal{T}_{\mathcal{B}}$ follows from Proposition \ref{stronglycontinuous} with $\mathcal{N}=\mathcal{C}=0$.
\end{itemize}
\end{remark}

\noindent Before we state our main result, we need to recall one concept from semigroup theory.
\begin{definition}\label{irr}
The positive semigroup $\mathcal{T}(t)$ is called \textbf{irreducible}, if for all $0\not\equiv \mu\in \mathcal{M}^+(\R^+),\,0\not\equiv \varphi\in BL(\R^+)_+=E^*_+$ there exists a time $t>0$ such that 
    \begin{align*}
       \langle\mathcal{T}(t)\,\mu,\varphi\rangle_{E,E^*}= \int_{\R^+}\phi\,\ud \mu>0. 
    \end{align*}
Here, the cone is given by
    \begin{align*}
    BL_+(\R^+)=\{\varphi\in BL(\R^+)\mid \varphi\geq 0\}.
    \end{align*}
\end{definition} 

\noindent We are now in the position to formulate our main result, which states that solutions of model \eqref{Model} approach a finite dimensional attractor.  The proof will be given in Section \ref{section:asymptotic_proof}.

\begin{theorem}\label{mainresult}
Under Assumptions \ref{Assum}(i)-(v), one of the following holds true.
\begin{itemize}
\item[(i)] The semigroup $\mathcal{T}(t)$ generated by $\mathcal{B}+\mathcal{C}+\mathcal{N}$ is uniformly exponentially stable, 
i.e. $\omega_0(\mathcal{T})<0$, that is, solutions of model \eqref{Model} tend to zero.
\item[(ii)] There exists a constant $\lambda_*\ge 0$  and a finite rank operator $P_*$ on $E$ such that the semigroup $\mathcal{T}(t)$ decomposes as
    \begin{align*}
    \mathcal{T}(t)=\mathcal{T}_*(t) + Q(t),
    \end{align*}
where
    \begin{align*}
    \mathcal{T}_*(t)=e^{\lambda_*\, t}\sum_{j=0}^{k_*-1}\frac{t^j}{j!}\left(\mathcal{B}+\mathcal{C}+\mathcal{N}-\lambda_*\right)^j\, P_*,
    \end{align*}
and
    \begin{align*}
    ||Q(t)||\le M_\delta e^{(\lambda_*-\delta) t},\quad \text{for some}\quad \delta>0,\, M_\delta\,\ge 1,\quad \forall\, t\ge 0.
    \end{align*}
\item[(iii)]
If in addition the semigroup $\mathcal{T}(t)$ is irreducible, then there exists a rank one operator $P_*$ such that
    \begin{align*}
    \displaystyle\lim_{t\to \infty} e^{-\lambda_*\,t}\,\mathcal{T}(t)=P_*.
    \end{align*}
In other words, the semigroup $\mathcal{T}(t)$ exhibits asynchronous exponential growth and approaches a one-dimensional globally attracting eigenspace associated with the dominant eigenvalue.
\end{itemize}
\end{theorem}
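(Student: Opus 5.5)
The plan is to prove quasi-compactness of $\mathcal{T}(t)$ on $E$, i.e. $\omega_{ess}(\mathcal{T})<\omega_0(\mathcal{T})$ unless $\omega_0<0$. Then (i)--(ii) follow from the standard spectral decomposition for positive quasi-compact semigroups, and (iii) follows from irreducibility via the Perron--Frobenius theory of positive semigroups on Banach lattices (as in \cite{EngelNagel,Clement,Arendt}). The strategy is perturbative. Write $\mathcal{A}=(\mathcal{B}+\mathcal{C})+\mathcal{N}$ and let $\mathcal{S}(t)$ be the transport--decay semigroup generated by $\mathcal{B}+\mathcal{C}$. By the Duhamel formula,
\[
\mathcal{T}(t)=\mathcal{S}(t)+\int_0^t \mathcal{T}(t-s)\,\mathcal{N}\,\mathcal{S}(s)\,\ud s .
\]
The aim is to show: (a) $\omega_0(\mathcal{S})\le -\kappa<0$, and (b) the Duhamel remainder is a compact operator on $E$ for each $t>0$. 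Then $\omega_{ess}(\mathcal{T})=\omega_{ess}(\mathcal{S})\le-\kappa$.

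For step (a) I would work dually on $E^*=\BL(\R^+)$. The adjoint semigroup is
\[
\mathcal{S}^*(t)\varphi(x)=\varphi(X_b(t,x))\exp\Big(-\int_0^t c(X_b(s,x))\,\ud s\Big),
\]
and $\|\mathcal{S}(t)\|=\|\mathcal{S}^*(t)\|_{\BL}$. The sup-norm decays like $e^{-\kappa t}$ since $c\ge\kappa$. For the Lipschitz seminorm I differentiate in $x$. Since $b'\le0$, the variational equation gives $\partial_x X_b(t,x)=\exp\big(\int_0^t b'(X_b)\,\ud s\big)\le1$. Assumption (v), $c\ge|c'|+\kappa$, is exactly what absorbs the derivative of the exponential weight, $\int_0^t|c'(X_b)|\partial_xX_b\,\ud s$, against the decay factor. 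I expect to obtain $|\mathcal{S}^*(t)\varphi|_{\Lip}\le e^{-\kappa t}\|\varphi\|_{\BL}$, up to a harmless constant, via a Gronwall-type estimate along characteristics. This gives $\omega_0(\mathcal{S})\le-\kappa$.

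Step (b) is the main obstacle. On $\R^+$ the operator $\mathcal{N}$ need not be compact, so I would first cut off at large $x$. Split $\mathcal{N}=\mathcal{N}\mathbf 1_{[0,R]}+\mathcal{N}\mathbf 1_{(R,\infty)}$, using a smooth Lipschitz cutoff to stay in $\BL$. For the bounded part, the image of the unit ball is $\{\int\eta(x)\,\ud\mu(x)\}$ with $x$ ranging over $[0,R]$. Because $x\mapsto\eta(x)$ is Lipschitz into $(\mathcal{M}^+,\|\cdot\|_{\BLN^*})$, the set $\{\eta(x):x\in[0,R]\}$ is compact in $E$. Its closed absolutely convex hull is therefore compact, by Mazur's theorem, which makes $\mathcal{N}\mathbf 1_{[0,R]}$ compact on $\mathcal{M}(\R^+)$ with the flat norm and hence on $E$. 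The integral $\int_0^t\mathcal{T}(t-s)K\mathcal{S}(s)\,\ud s$ with $K$ compact is again compact, since strongly continuous families composed with a compact operator give norm-continuous integrands.

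The tail part is the delicate point. I would want its norm, or its contribution to the measure of noncompactness, to be small, but $\|\eta\|$ need not decay at infinity. The fallback is therefore not to discard the tail but to estimate the Kuratowski measure of noncompactness $\alpha$ directly. Write $\alpha(\mathcal{T}(t))\le\alpha(\mathcal{S}(t))+\alpha(\text{tail Duhamel})$ and exploit that, for large $x$, $\eta(x)$ is close to a limit in flat norm only if we assume it. If necessary I would add the hypothesis that $\eta$ converges at infinity, or bound the tail via a Gronwall argument on $\alpha$ so as to get $\omega_{ess}\le-\kappa+\text{small}$. Either way, once $\omega_{ess}(\mathcal{T})<\omega_0(\mathcal{T})$ holds whenever $\omega_0\ge0$, I define $\lambda_*=s(\mathcal{A})=\omega_0(\mathcal{T})$. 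This is a pole of the resolvent with finite-rank residue $P_*$ and order $k_*$ by positivity (Perron--Frobenius), which yields (ii); otherwise $\omega_0<0$, which is (i). Under irreducibility the pole is simple, $k_*=1$, the residue is rank one of the form $\langle\cdot,\psi_*\rangle\mu_*$ with strictly positive eigenvectors, and $\lambda_*$ is the only spectral value on its vertical line. This gives $e^{-\lambda_*t}\mathcal{T}(t)\to P_*$, which is (iii).
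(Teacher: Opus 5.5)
Your architecture matches the paper's. You treat $\mathcal N$ as a perturbation of the transport--decay semigroup $\mathcal S$, prove $\omega_0(\mathcal S)\le-\kappa$ dually on $\BL(\R^+)$ using Assumptions \ref{Assum}(iv)--(v), and then invoke Theorem \ref{quasi-compact-char}, positivity and irreducibility. Step (a) is fine. The paper uses the norm $\sup_x(|\varphi|+|\varphi'|)$ and the Trotter formula to get exactly $e^{-\kappa t}$. Your differentiation along characteristics gives $|\mathcal S^*(t)\varphi|_{\Lip}\le e^{-\kappa t}\bigl(|\varphi|_{\Lip}+e^{-1}\|\varphi\|_\infty\bigr)$, because $e^{-I}(I-\kappa t)\le e^{-\kappa t-1}$ for $I=\int_0^t c(X_b(s,x))\,\ud s\ge\kappa t$. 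That suffices, and it avoids applying Trotter on $\BL(\R^+)$, where $\mathcal T_{\mathcal B^*}$ is only weak$^*$ continuous.

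The genuine gap is step (b), and it goes beyond the tail. The flat unit ball is not bounded in total variation: $\mu_n=n(\delta_{x+1/n}-\delta_x)$ has $\|\mu_n\|_{\BLN^*}\le1$ but $\|\mu_n\|_{\TV}=2n$. Its image $\mathcal N\mu_n=n(\eta(x+1/n)-\eta(x))$ is a difference quotient of $\eta$, not a point of a fixed multiple of $\overline{\mathrm{aco}}\{\eta(x):x\le R\}$; Mazur's theorem only controls $\TV$-bounded sets. Concretely, $\eta(x)=\delta_x$ satisfies Assumption \ref{Assum}(ii), since $\|\delta_x-\delta_y\|_{\BLN^*}\le|x-y|$, and gives $\mathcal N=\mathrm{Id}$. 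So neither $\mathcal N$ nor any cut-off $\mathcal N\mathbf 1_{[0,R]}$ is compact. The kernel $\eta(x)=\delta_{(x-h)_+}$ mentioned in the paper behaves the same way: it acts isometrically on measures supported in $[h,\infty)$.

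Your doubt is therefore justified, and the paper does not resolve it. In the proof of Proposition \ref{quasi-compact} it simply asserts that $\mathcal N$ is compact because it is approximable by finite-rank operators, and this does not follow from (ii). Your fallback $\omega_{ess}\le-\kappa+\text{small}$ cannot work under (i)--(v) either. Take $b\equiv1$, $c\equiv\kappa$, $\eta(x)=K\delta_x$. Then $\mathcal T(t)=e^{(K-\kappa)t}\tau_t$, with $\tau_t$ the isometric right shift on $E$, so $\omega_{ess}(\mathcal T)=K-\kappa$.

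Now add the rank-one term $\varepsilon_0e^{-x}\delta_0$ to $\eta(x)$, with $0<\varepsilon_0<1$. This makes $\mathcal T$ irreducible (Proposition \ref{irreducible}) and leaves $\omega_{ess}$ unchanged. Yet $e^{-(K-\kappa)t}\mathcal T(t)\delta_{x_0}\ge\delta_{x_0+t}$ has mass between $1$ and $(1-\varepsilon_0)^{-1}$ and is not tight, so it has no flat limit. For $\lambda\neq K-\kappa$ the rescaled semigroup $e^{-\lambda t}\mathcal T(t)$ blows up or tends to $0$. Hence for $K>\kappa$ statement (iii) fails.

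Completing step (b) therefore needs an extra hypothesis on $\eta$. One option is to make $\mathcal N$ compact on $E$, e.g. $\eta(x)=\sum_{i=1}^k a_i(x)\nu_i$ with $a_i\in\BL(\R^+)$ and $\nu_i\in\mathcal M^+(\R^+)$, or operator-norm limits in $\mathcal L(E)$ of the corresponding operators.

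A further issue, shared with the paper: $\|\cdot\|_{\BLN^*}$ is not a lattice norm, since $\|n(\delta_{1/n}-\delta_0)\|_{\BLN^*}\le1$ while $\|n(\delta_{1/n}+\delta_0)\|_{\BLN^*}=2n$. The Banach-lattice Perron--Frobenius results you use for (iii) therefore need separate justification on $E$.
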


\begin{remark}
We refer to Proposition \ref{irreducible} for an irreducibility condition of the semigroup $\mathcal{T}(t)$.
\end{remark}

\section{Proof of asymptotic behaviour}
\label{section:asymptotic_proof}
\noindent

\noindent The goal of this section is to prove Theorem \ref{mainresult}. Since \eqref{Model} is formulated on an unbounded state space, the associated semigroup is not eventually compact and classical spectral results based on eventual compactness are not directly applicable. We therefore rely on the weaker notion of quasi-compactness, which still provides sufficient spectral information to characterise the asymptotic behaviour of solutions.\\
To analyse the asymptotic behaviour of solutions to \eqref{Model}, it suffices to study the semigroup $\mathcal{T}(t)$ generated by $\mathcal{A}:=\mathcal{B}+\mathcal{C}+\mathcal{N}$. The key quantities for the analysis are the growth bound, the essential growth bound, and the spectral bound of $\mathcal{A}$, which we recall below.\\
The growth bound of $\mathcal{T}(t)$ and the spectral bound of its generator $\mathcal{A}$ are defined by
    \begin{align}
\label{growth_bound}
\omega_0=\omega_0(\mathcal{T}):=\inf\left\{w\in\mathbb{R}\,|\,\exists\, M_w\ge 1,\,\text{such that}\,  ||\mathcal{T}(t)||\le M_w e^{wt},\,\forall\,t\in\mathbb{R}^+ \right\},
    \end{align}
    \begin{align*}
    s(\mathcal{A})=\sup\left\{\text{Re}(\lambda)\,|\,\lambda\in\sigma(\mathcal{A})\right\},
    \end{align*}
where $\sigma(\mathcal{A})$ denotes the spectrum of $\mathcal{A}$. Note that in general
\[
-\infty\le s(\mathcal{A})\le\omega_0<\infty.
\]
For a bounded linear operator $T$ on a Banach space $\mathcal Y$, the essential norm is defined as
    \begin{align*}
	\|T\|_\text{ess}:=\text{dist}\,\left(T, K({\mathcal Y})\right),
    \end{align*}
where $K({\mathcal Y})$ denotes the set of compact linear operators on $\mathcal Y$.\\
The \textbf{essential growth bound} of the semigroup $\mathcal{T}(t)$ is then given by
    \begin{align*}
\omega_\text{ess}\left(\mathcal{T}\right)\left[=\omega_\text{ess}\left(\mathcal{A}\right)\right]
=
\lim_{t\rightarrow \infty}
\left(\frac{\ln \|\mathcal{T}(t)\|_\text{ess}}{t}\right).
    \end{align*}
Moreover, for every compact operator ${\mathcal K}\in K({\mathcal Y})$,
    \begin{align*}
	\omega_\text{ess}\left({\mathcal A}\right)=\omega_\text{ess}\left({\mathcal A}+{\mathcal K}\right).
    \end{align*}
The importance of the essential growth bound stems from the identity
\begin{align}
    \label{alternative_characterisation_w_0}
	\omega_0\left({\mathcal T}\right)= \max\,\left\{\omega_\text{ess}\left({\mathcal T}\right), s\left({\mathcal A}\right)\right\},
\end{align}
see \cite[Cor. 2.11, Ch. IV]{EngelNagel}.
\\
As eventual compactness is not available for the semigroup $\mathcal{T}(t)$, we consider the weaker notion of quasi-compactness, see \cite{EngelNagel}.
\begin{definition}
A strongly continuous semigroup $\mathcal{T}(t)$ on a Banach space $E$ is called \textbf{quasi-compact} if
\begin{equation*}
\liminf_{t\to\infty} \left\{\norma{\mathcal{T}(t)-\mathcal{K}}\,\,\vert\,\, \mathcal{K}\in K(E)\right\}=0.
\end{equation*}
\end{definition}
\noindent Note that, as stated in Proposition 3.5. in \cite[Ch.V]{EngelNagel}, a semigroup $\mathcal{T}(t)$ is quasi-compact if and only if $\omega_{ess}(\mathcal{T})<0$. 
\begin{proposition}\label{quasi-compact}
Under Assumptions \ref{Assum}(i)-(v) the semigroup $\mathcal{T}(t)$ generated by $\mathcal{B}+\mathcal{C}+\mathcal{N}$ is quasi-compact.
\end{proposition}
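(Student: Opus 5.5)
The plan is to prove $\omega_{\mathrm{ess}}(\mathcal{T})<0$, which by \cite[Prop. 3.5, Ch. V]{EngelNagel} is equivalent to quasi-compactness. I would split the generator as $\mathcal{A}=(\mathcal{B}+\mathcal{C})+\mathcal{N}$ and argue in three steps.

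\textbf{Step 1: exponential stability of the transport--decay semigroup.} Let $\mathcal{S}(t)$ be the semigroup generated by $\mathcal{B}+\mathcal{C}$; it exists by Proposition \ref{stronglycontinuous} with $\eta\equiv0$. I will show $\omega_0(\mathcal{S})\le-\kappa<0$ by duality. For $\varphi\in\BL(\R^+)$ we have
\[
\langle\mathcal{S}(t)\mu,\varphi\rangle=\int_{\R^+}\varphi(X_b(t,x))\,e^{-\int_0^t c(X_b(s,x))\,\ud s}\,\ud\mu(x),
\]
so it suffices to bound $\psi_t(x):=\varphi(X_b(t,x))\,e^{-\int_0^t c(X_b(s,x))\ud s}$ in $\|\cdot\|_{\BL}$ by $M e^{-\kappa t}\|\varphi\|_{\BL}$.
\begin{itemize}
\item The sup-norm bound $\|\psi_t\|_\infty\le e^{-\kappa t}\|\varphi\|_\infty$ follows from $c\ge\kappa$.
\item For the Lipschitz part, differentiate in $x$. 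Since $\partial_x X_b(t,x)=\exp\!\big(\int_0^t b'(X_b(s,x))\ud s\big)\le1$ by (iv), the derivative of the first factor is at most $|\varphi|_{\Lip}\,e^{-\kappa t}$.
\item The derivative of the exponent gives $\int_0^t |c'(X_b)|\,\partial_xX_b\,\ud s\le\int_0^t|c'(X_b(s,x))|\,\ud s$. By (v), $|c'|\le c-\kappa$, so this term is controlled by
\[
\Big(\int_0^t (c-\kappa)\,\ud s\Big)\,e^{-\int_0^t c\,\ud s}\le \sup_{u\ge0}\, u\,e^{-u}\,e^{-\kappa t}.
\]
\end{itemize}
Combining these gives $\|\psi_t\|_{\BL}\le 2e^{-\kappa t}\|\varphi\|_{\BL}$. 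Hence $\|\mathcal{S}(t)\|\le 2e^{-\kappa t}$ on $\mathcal{M}(\R^+)$, and by density on $E$. This step is precisely where assumptions (iv) and (v) enter.

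\textbf{Step 2: compactness of the perturbation.} By the Dyson--Phillips/Duhamel formula,
\[
\mathcal{T}(t)=\mathcal{S}(t)+\int_0^t\mathcal{S}(t-s)\,\mathcal{N}\,\mathcal{T}(s)\,\ud s .
\]
If the integral term is compact for each $t$, then $\|\mathcal{T}(t)\|_{\mathrm{ess}}\le 2e^{-\kappa t}$, so $\omega_{\mathrm{ess}}\le-\kappa<0$. Alternatively, I would invoke the stated invariance $\omega_{\mathrm{ess}}(\mathcal{A})=\omega_{\mathrm{ess}}(\mathcal{A}+\mathcal{K})$ with $\mathcal{K}=\mathcal{N}$, which gives the same conclusion directly once $\mathcal{N}$ is compact on $E$.

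\textbf{Step 3: compactness of $\mathcal{N}$ — the main obstacle.} With only Assumption \ref{Assum}(ii) this is delicate: $\eta(x)=\delta_{(x-h)_+}$ gives a noncompact shift. What I would try first is to show that $\mathcal{N}$ maps bounded sets of $E$ into relatively compact sets. This needs two ingredients.
\begin{itemize}
\item \emph{Equicontinuity in the $\BL$ topology.} The dual $\mathcal{N}^*\varphi(x)=\langle\eta(x),\varphi\rangle$ sends the unit ball of $\BL$ to functions that are uniformly bounded and uniformly Lipschitz, so by Arzelà--Ascoli they are relatively compact locally uniformly.
\item \emph{Tightness at infinity.} Compactness on all of $\R^+$ additionally requires the tails of the image to be uniformly small.
\end{itemize}
So the key technical point is an Arzelà--Ascoli argument for $\mathcal{N}^*$ on $\BL(\R^+)$ together with some tail control; by Schauder's theorem compactness of $\mathcal{N}^*$ transfers to $\mathcal{N}$. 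If such tail control is unavailable, I would instead cut off: write $\eta=\eta\mathbf 1_{[0,R]}+\eta\mathbf 1_{(R,\infty)}$. The first part gives a compact operator, and the tail part I would try to absorb into $\mathcal{S}$, using the decay rate $\kappa$ relative to its norm. Making this last absorption rigorous is where I expect the real difficulty to lie.
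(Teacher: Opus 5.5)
Your Steps 1--2 are sound and follow the paper's architecture. The paper also uses invariance of quasi-compactness under compact perturbations to reduce the problem to $\omega_0(\mathcal{T}_{\mathcal{B}+\mathcal{C}})\le-\kappa$, and proves that bound by duality using (iv)--(v). Your direct estimate of the combined adjoint flow is correct: the exponent term contributes at most $e^{-1}e^{-\kappa t}\|\varphi\|_\infty$, so the constant $2$ is safe. It is also cleaner than the paper's route. The paper bounds $\mathcal{T}_{\mathcal{B}^*}$ and $\mathcal{T}_{\mathcal{C}^*}$ separately in the norm $\sup_x(|\varphi|+|\varphi'|)$ and combines them with the Trotter product formula on $\BLN(\R^+)$, where the transport semigroup is not strongly continuous.

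The genuine gap is Step 3: you never establish that $\mathcal{N}$ is compact, and the tools you propose cannot give it. Arzel\`a--Ascoli makes the image under $\mathcal{N}^*$ of the unit ball of $\BLN(\R^+)$ relatively compact only for locally uniform convergence (uniform convergence, with tail control). Schauder's theorem, however, needs relative compactness in the $\BLN$ norm, which also measures Lipschitz constants, and uniform Lipschitz bounds give no compactness there. The cut-off does not help either. Take $\eta(x)=a\delta_x$, a smooth $\chi$ with $\chi\equiv1$ on $[0,1]$, and the localized kernel $a\chi(x)\delta_x$. It acts as $a$ times the identity on the closure in $E$ of the measures supported in $[0,1]$, an infinite-dimensional subspace, so it is not compact. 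Absorbing the tail into $\mathcal{S}$ would also need a smallness condition on $\eta$ relative to $\kappa$, which (i)--(v) do not provide.

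The paper settles this step with the one-line claim that $\mathcal{N}$ ``can be approximated by operators of finite dimensional range''. As your shift example suggests, this does not follow from Assumption (ii), and in fact the proposition fails under (i)--(v) alone. Take $b\equiv1$ and $c\equiv c_0>0$, so (v) holds with $\kappa=c_0$. Take $\eta(x)=a\delta_x$, which is admissible since $\|a\delta_x-a\delta_y\|_{\BLN^*}\le a|x-y|$. Then $\mathcal{N}=aI$ and $\mathcal{T}(t)=e^{(a-c_0)t}S(t)$, where $S(t)$ is the right shift by $t$, an isometry of $E$. The members of $\{\delta_{2k}\}_{k\in\naturali}$ are pairwise at $\|\cdot\|_{\BLN^*}$-distance $2$, and $S(t)$ maps this family onto one with the same property. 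Hence $\|S(t)-K\|\ge1$ for every compact $K$, so $\omega_{\mathrm{ess}}(\mathcal{T})=a-c_0$. For $a\ge c_0$ this is $\ge0$, so $\mathcal{T}$ is not quasi-compact.

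So Step 3 cannot be completed from (i)--(v). What is missing is an additional hypothesis on $\eta$ that makes $\mathcal{N}:E\to E$ compact, ruling out Dirac-type kernels such as $\delta_x$ or $\delta_{(x-h)_+}$. With such a hypothesis, your Steps 1--2 finish the proof as in the paper.
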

\begin{proof}
First note that the integral operator $\mathcal{N}$ is compact, since it can be approximated by operators of finite dimensional range. By invoking Proposition 3.6 from \cite[Ch.V]{EngelNagel} it is sufficient to show that the semigroup $\mathcal{T}_{\mathcal{B}+\mathcal{C}}(t)$ generated by $\mathcal{B}+\mathcal{C}$ is quasi-compact. On the grounds of  Proposition 3.5 in \cite[Ch.V]{EngelNagel} it is enough to show that the following inequality holds true 
    \begin{align*}
    \max\left\{s(\mathcal{B}+\mathcal{C}),\omega_{ess}\left(\mathcal{T}_{\mathcal{B}+\mathcal{C}}\right)\right\}=\omega_0\left(\mathcal{T}_{\mathcal{B}+\mathcal{C}}\right)<0;
    \end{align*}
that is, the semigroup $\mathcal{T}_{\mathcal{B}+\mathcal{C}}(t)$ is strictly contractive. To this end, we first compute the adjoint semigroup generated by $(\mathcal{B}+\mathcal{C})^*$, as their operator norms coincide
    \begin{align}
    \label{duality}
    \norma{\mathcal{T}_{\mathcal{B}+\mathcal{C}}}_{\mathcal L\left(( E, \norma{\cdot}_{\BLN^*});(E, \norma{\cdot}_{\BLN^*})\right)} = \norma{\mathcal{T}_{(\mathcal{B}+\mathcal{C})^*}}_{\mathcal L\left( \BL;\BL\right)}.
    \end{align}
To simplify the computations, we will use an equivalent norm on $BL(\R^+)$
    \begin{align}
    \label{new_BL}
    \|\phi\|_{bL}:=\sup_{x\in \R^+}\left[|\phi(x)|+|\partial_x \phi(x)|\right].
\end{align}
and we directly see
    \begin{align}
    \label{equivalence_constants}
    \|\varphi\|_{BL}\leq \|\varphi\|_{bL}\leq 2\|\varphi\|_{BL}.
    \end{align}
To avoid confusion, we will denote the operator norm induced by $\|\cdot\|_{BL}$ with $\|\cdot\|$ and the operator norm induced by $\|\cdot\|_{bL}$ with $\|\cdot\|_{op}$. Using Remark \ref{rem:transport_for_measures}, we see for $\mu\in \mathcal{M}(\R^+)$ and $\varphi\in \BL(\R^+)\cap C^1(\R^+)\subset BL(\R^+)=E^*$ (cf. \ref{remark:measure_space} ii))
    \begin{align*}
    \langle (\mathcal{B}+\mathcal{C})\mu,\varphi\rangle_{E,E^*}=&\int_{\R^+}\varphi(x)\ud[(\mathcal{B}+\mathcal{C})\mu](x)=\int_{\R^+}b(x)\partial_x\varphi(x)-c(x)\varphi(x)\ud\mu(x)\\
    =&\int_{\R^+}(b(x)\partial_x(\cdot)-c(x))\varphi(x)\ud\mu(x)=\langle\mu,(\mathcal{B}+\mathcal{C})^*\varphi\rangle_{E,E^*}, 
    \end{align*}
and we conclude
    \begin{align*}
    (\mathcal{B}+\mathcal{C})^*=b\partial_x-c.
    \end{align*}
The operator $\mathcal{C}^*:BL(\R^+)\to BL(\R^+), \varphi\mapsto -c\varphi$ is bounded which yields $D(\mathcal{C}^*)=BL(\R^+)$. This together with the linearity of the problem implies     \begin{align}\label{linearity_of_problem_adjoint}
    (\mathcal{B}+\mathcal{C})^*=\mathcal{B}^*+\mathcal{C}^*,
    \end{align}
i.e. we can consider two separate problems given by the adjoint operators
    \begin{align*}
    \mathcal{B^*}\,\phi=  b \, \partial_x \phi, \qquad \qquad
    \mathcal{C^*}\,\phi=-c\,\phi 
    \end{align*}
and compute their operator norms separately.

\noindent We start with the semigroup $\mathcal{T}_{\mathcal{B}^*}(t)$ generated by $\mathcal{B^*}$ and claim that it is contractive (see e.g. \cite[Ch.II]{EngelNagel}), i.e. $\left|\left|\mathcal{T}_{\mathcal{B}^*}(t)\right|\right|_{op}\le 1,\,\forall\,t\ge 0$. To see this, note that the induced Cauchy problem 
    \begin{align*}
    \partial_t\varphi(t,x)=\mathcal{B}^*\varphi(t,x), \qquad \varphi(0,x)=\varphi_0(x)
    \end{align*}
is solved by 
    \begin{align}
    \label{T_B*}
    \mathcal{T}_{\mathcal{B}^*}(t)\varphi_0=\varphi_0(X_b(t,x)),
    \end{align}
where $X_b$ denotes the flow of $b$ which is defined in \eqref{flow}.
Next we need to compute $\partial_x X_b(t,x)$ as this expression will appear later. Note that  
    \begin{align*}
    \partial_t\left(\partial_x X_b(t,x)\right)=\partial_x\partial_t X_b(t,x)=\partial_xb(X_b(t,x))
    =b'(X_b(t,x))\partial_x X_b(t,x),
    \end{align*}
which implies 
    \begin{align}
    \label{derivative}
\partial_x X_b(t,x)=C\exp\left(\int_0^tb'(X_b(s,x))\,\ud s\right).
    \end{align}
As 
    \begin{align*}
    C=\partial_x X_b(t,x)\mid_{t=0}=\partial_x X_b(0,x)=\partial_x I(x)=1, 
    \end{align*}
we conclude $C=1$.\\
Now we can compute the operator norm of $\mathcal{T}_{\mathcal{B}^*}$. Let $t\geq 0$ and $\varphi\in BL(\R^+)$ with $\|\varphi\|_{bL}\leq 1$. Then using \eqref{T_B*},  \eqref{derivative} as well as $b'\leq 0$  we see
    \begin{align*}
\|\mathcal{T}_{B^*}(t)\varphi\|_{bL^*}
=&\sup_{x\in \R^+}\left[|\varphi(X_b(t,x))|+|\partial_x\varphi(X_b(t,x)|\right]\\
=&\sup_{x\in \R^+}\left[|\varphi(X_b(t,x))|+|\varphi'(X_b(t,x))\partial_x X_b(t,x)|\right]\\
=&\sup_{x\in \R^+}\left[|\varphi(X_b(t,x))|+|\varphi'(X_b(t,x))\exp\left(\int_0^tb'(X_b(s,x))\ud s\right)|\right]\\
\leq& \sup_{x\in \R^+}\left[|\varphi(X_b(t,x))|+|\varphi'(X_b(t,x))|\right]=\|\varphi\|_{bL}, 
    \end{align*}
so that indeed $\|\mathcal{T}_{\mathcal{B}*}(t)\|_{op}\leq 1$ for all $t\geq 0$.\\
From the definition of the growth bound $\omega_0$ (see \eqref{growth_bound}) we conclude      
    \begin{align*}
         \omega_0\left(\mathcal{T}_{\mathcal{B}^*}\right)\le 0.
    \end{align*}
In a second step, we show that the bounded operator $\mathcal{C}^*$ generates a positive contraction semigroup $\mathcal T_{\mathcal{C}^*}(t)$ satisfying $\norma{\mathcal T_{\mathcal{C}^*}(t)}_{op}\le e^{-\kappa t},\, t\ge 0$, for some $\kappa>0$. 
The induced Cauchy problem
    \begin{align*}
        \partial_t\varphi(t,x)=\mathcal{C}^*\varphi(t,x), \qquad \varphi(0,x)=\varphi_0(x)
    \end{align*}
is solved by
    \begin{align}
    \label{T_C*}
    \mathcal{T}_{\mathcal{C}^*}(t)\varphi_0(x)=
    \varphi_0(x)e^{-c(x)t},
    \end{align}
so that we compute for $t\geq 0$ and $\varphi$ with $\|\varphi\|_{bL}\leq 1$
    \begin{align}
        \begin{split}
            \label{T_C*}
\|\mathcal{T}_{\mathcal{C}^*}(t)\varphi\|_{bL}=&\sup_{x\in \R^+}\left [|\varphi(x)e^{-c(x)t}|+|\partial_x(\varphi(x)e^{-c(x)t})|\right]\\
=&\sup_{x\in \R^+}\left[|\varphi(x)|e^{-c(x)t}+|\varphi'(x)e^{-c(x)t}-\varphi(x)c'(x)te^{-c(x)t}|\right]\\
\leq &\sup_{x\in \R^+}\left[|\varphi(x)|e^{-c(x)t}(1+t|c'(x)|)+|\varphi'(x)|e^{-c(x)t}\right]\\
\leq& \sup_{x\in \R^+}\left[|\varphi(x)|e^{(-c(x)+|c'(x)|)t}+|\varphi'(x)|e^{-c(x)t}\right]\\
\leq& e^{-\kappa t}\sup_{x\in \R^+}\left[|\varphi(x)|+|\varphi'(x)|\right]=e^{-\kappa t}\|\varphi\|_{bL}.
        \end{split}
\end{align}
Assumption \ref{Assum} v) was used in the last inequality. From \eqref{T_C*} we conclude for all $t\geq 0$ $\|\mathcal{T}_{C^*}(t)\|_{op}\leq e^{-\kappa t}$ and in particular we have for all $n\in \mathbb{N}$ and $t\geq 0$
    \begin{align*}
        \left\|\left[\mathcal{T}_{\mathcal{B}^*}\left(\frac t n\right)\mathcal{T}_{\mathcal{C}^*}\left(\frac t n\right)\right]^n\right\|_{op}
        \leq\left\|\mathcal{T}_{\mathcal{B}^*}\left(\frac t n\right)\right\|^n_{op}\left\|\mathcal{T}_{\mathcal{C}^*}\left(\frac t n\right)\right\|^n_{op}\leq 1^n \left(e^{-\kappa \frac t n}\right)^n=e^{-\kappa t}.
    \end{align*}
From \eqref{T_B*} and \eqref{T_C*} we directly see that the semigroups $\mathcal{T}_{\mathcal{B}^*}$ and $\mathcal{T}_{\mathcal{C}^*}$ are strongly continuous. An application of the Trotter product formula (see e.g. Corollary 5.8  in \cite[Ch.III]{EngelNagel}) yields that the semigroup $\mathcal{T}_{\mathcal{B}^*+\mathcal{C}^*}(t)$ generated by $\mathcal{B}^*+\mathcal{C}^*$ satisfies 
    \begin{align*}
    \left\|\mathcal{T}_{\mathcal{B}^*+\mathcal{C}^*}(t)\right\|_{op}\le \exp\left(-\kappa\,t\right),\quad \forall\,t\ge 0.
    \end{align*}
This together with the duality \eqref{duality}, observation \eqref{linearity_of_problem_adjoint} and the equivalence \eqref{equivalence_constants} implies 
    \begin{align*}
    \left\|\mathcal{T}_{\mathcal{B}+\mathcal{C}}(t)\right\|=\left\|\mathcal{T}_{(\mathcal{B}+\mathcal{C})^*}(t)\right\|
    =\left\|\mathcal{T}_{\mathcal{B}^*+\mathcal{C}^*}(t)\right\|\leq \left\|\mathcal{T}_{\mathcal{B}^*+\mathcal{C}^*}(t)\right\|_{op}
    \le \exp\left(-\kappa\,t\right),\quad \forall\,t\ge 0.
    \end{align*}
In particular,  $\omega_0\left(\mathcal{T}_{\mathcal{B}+\mathcal{C}}\right)\le -\kappa<0$, completing the proof.
\end{proof}

\begin{remark}
\label{rem:kappa}
The proof of Proposition \ref{quasi-compact} in fact shows that $\omega_{ess}(\mathcal{T})\le -\kappa$, i.e. the essential spectrum of $\mathcal{T}(t)$ is contained in the left half-plane $\left\{\lambda\in\mathbb{C}\,|\,\text{Re}(\lambda)\le -\kappa\right\}$.
\end{remark}

\noindent The significance of quasi-compactness of a semigroup $\mathcal{T}(t)$ is demonstrated by the following cha\-rac\-te\-ri\-sation theorem, recalled from \cite{EngelNagel} for the reader's convenience.
\begin{theorem}\cite[Ch.V, Theorem 3.7]{EngelNagel}\label{quasi-compact-char}
Let $\mathcal{T}(t)$ be a quasi-compact and strongly con\-ti\-nuous semigroup with generator $\mathcal{A}$ on the Banach space $E$. Then, the following holds.
\begin{itemize}
\item[(i)] The set $\left\{\lambda\in\sigma(\mathcal{A})\,|\, Re(\lambda)\ge 0\right\}$ is finite (possibly empty!), and consists of poles of the resolvent operator $R(\cdot, \mathcal{A})$ of finite algebraic multiplicity.
\item[(ii)] If we denote the set of poles by $\lambda_1,\cdots,\lambda_m$ and their corresponding residues by $P_1,\cdots,P_m$, with orders $k_1,\cdots,k_m$, respectively, then we have
\begin{align*}
\mathcal{T}(t)=\mathcal{T}_1(t)+\cdots+\mathcal{T}_m(t)+\mathcal{R}(t),
\end{align*}
where
\begin{align*}
\mathcal{T}_n(t)=e^{\lambda_n\,t}\displaystyle\sum_{j=0}^{k_n-1}\frac{t^j}{j!}(\mathcal{A}-\lambda_n)^j\,P_n,\quad t\ge 0,\quad 1\le n\le m,
\end{align*}
and
\begin{align*}
||\mathcal{R}(t)||\le M\,e^{-\varepsilon\,t},\quad \text{for some} \quad \varepsilon>0,\,M\ge 1,\quad \forall\,t\ge 0.
\end{align*}
\end{itemize}
\end{theorem}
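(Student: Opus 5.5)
The plan is to reduce everything to the identity \eqref{alternative_characterisation_w_0}, the definition of the essential growth bound, and Riesz--Schauder theory for a single operator $\mathcal{T}(t_0)$. First, quasi-compactness gives $\omega_{\text{ess}}(\mathcal{T})<0$ (Proposition 3.5 in \cite[Ch.V]{EngelNagel}). Fix $t_0>0$. Then the essential spectral radius $r_{\text{ess}}(\mathcal{T}(t_0))=e^{t_0\omega_{\text{ess}}}$ is strictly less than $1$. By the Riesz--Schauder/Fredholm theory for bounded operators, every point of $\sigma(\mathcal{T}(t_0))$ in $\{|z|>r_{\text{ess}}\}$ is an isolated eigenvalue of finite algebraic multiplicity. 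These can accumulate only at the circle $|z|=r_{\text{ess}}$, so only finitely many of them lie in $\{|z|\ge 1\}$.

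Next I transfer this to the generator. I use the spectral inclusion $e^{t_0\sigma(\mathcal{A})}\subset\sigma(\mathcal{T}(t_0))$ together with the spectral mapping theorem for the point spectrum, $\sigma_p(\mathcal{T}(t_0))\setminus\{0\}=e^{t_0\sigma_p(\mathcal{A})}$, which holds for all $C_0$-semigroups. From these:
\begin{itemize}
\item every $\lambda\in\sigma(\mathcal{A})$ with $\mathrm{Re}\,\lambda\ge0$ is an eigenvalue;
\item the candidates lie in the finitely many sets $\{\lambda: e^{t_0\lambda}=z_k\}$, $|z_k|\ge1$.
\end{itemize}
Each such preimage is a lattice $\lambda_k+2\pi i\mathbb{Z}/t_0$, and this is the \emph{main obstacle}: I must rule out infinitely many of its points being eigenvalues. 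I will use the finite dimensionality of the spectral subspace of $\mathcal{T}(t_0)$ for $z_k$. Eigenvectors of $\mathcal{A}$ for distinct $\lambda$ in the lattice are linearly independent and all lie in that subspace, so only finitely many occur. This gives part (i) as a set statement. Each $\lambda_n$ is then a pole of $R(\cdot,\mathcal{A})$ of finite algebraic multiplicity, because its Riesz projection $P_n$ is dominated by the finite-rank spectral projection of $\mathcal{T}(t_0)$. I will check the commutation $P_n\mathcal{T}(t)=\mathcal{T}(t)P_n$ via the resolvent integral.

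For (ii), let $P=P_1+\cdots+P_m$, which is a finite-rank projection commuting with the semigroup. On $\operatorname{ran}P_n$, which is finite dimensional and $\mathcal{A}$-invariant, $\mathcal{A}$ is a matrix with single eigenvalue $\lambda_n$. Hence $\mathcal{A}-\lambda_n$ is nilpotent of order $k_n$, and $\mathcal{T}(t)P_n=e^{\lambda_nt}e^{t(\mathcal{A}-\lambda_n)}P_n$ expands exactly to $\mathcal{T}_n(t)$. Define $\mathcal{R}(t)=\mathcal{T}(t)(I-P)$, the restricted semigroup on $\ker P$ with generator $\mathcal{A}|_{\ker P}$. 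Its spectrum is $\sigma(\mathcal{A})\setminus\{\lambda_1,\dots,\lambda_m\}$. The set $\{\mathrm{Re}\,\lambda>-\eta\}\cap\sigma(\mathcal{A})$ is finite for small $\eta>0$, by the same argument with $1$ replaced by $e^{-\eta t_0}>r_{\text{ess}}$. Therefore $s(\mathcal{A}|_{\ker P})<0$. Moreover $\omega_{\text{ess}}$ of the restriction is at most $\omega_{\text{ess}}(\mathcal{T})<0$, since it differs by a finite-rank part. Applying \eqref{alternative_characterisation_w_0} to the restricted semigroup then gives a growth bound $-\varepsilon<0$, i.e. $\|\mathcal{R}(t)\|\le Me^{-\varepsilon t}$, which completes (ii).
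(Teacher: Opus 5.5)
The paper does not prove this result; it quotes it from Engel--Nagel. Your part (ii) is essentially the standard argument and is fine: the finite-dimensional computation on $\mathrm{ran}\,P_n$, the restriction to $\ker P$, the observation that $\omega_{\text{ess}}$ does not increase under the finite-rank compression, and \eqref{alternative_characterisation_w_0} applied to the restricted semigroup.

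The gap is in part (i), in your first bullet. The spectral inclusion and $\sigma_p(\mathcal{T}(t_0))\setminus\{0\}=e^{t_0\sigma_p(\mathcal{A})}$ give less than you claim. For $\lambda\in\sigma(\mathcal{A})$ with $\mathrm{Re}\,\lambda\ge0$, they show that $e^{t_0\lambda}$ is an eigenvalue of $\mathcal{T}(t_0)$, and hence that \emph{some} lattice point $\lambda+2\pi i k/t_0$ is an eigenvalue of $\mathcal{A}$. They say nothing about $\lambda$ itself, which a priori could lie in the approximate point or residual spectrum of $\mathcal{A}$ without being an eigenvalue. Your linear-independence count therefore only shows that $\sigma_p(\mathcal{A})\cap\{\mathrm{Re}\,\lambda\ge0\}$ is finite. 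Three things are left unproved: finiteness of $\sigma(\mathcal{A})\cap\{\mathrm{Re}\,\lambda\ge0\}$; isolatedness of the $\lambda_n$, which you need before you can form Riesz projections $P_n$; and the finiteness of $\sigma(\mathcal{A})\cap\{\mathrm{Re}\,\lambda>-\eta\}$ that you invoke in (ii).

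The missing idea is to reduce the semigroup with a spectral projection of $\mathcal{T}(t_0)$ instead of mapping point spectra. Pick $\eta>0$ with $e^{-\eta t_0}>r_{\text{ess}}(\mathcal{T}(t_0))$. Let $Q$ be the finite-rank Riesz projection of $\mathcal{T}(t_0)$ for the finite set $\sigma(\mathcal{T}(t_0))\cap\{|z|\ge e^{-\eta t_0}\}$. Since $Q$ commutes with all $\mathcal{T}(t)$, the splitting $E=\mathrm{ran}\,Q\oplus\ker Q$ reduces $\mathcal{A}$, and $\sigma(\mathcal{A})=\sigma(\mathcal{A}|_{\mathrm{ran}\,Q})\cup\sigma(\mathcal{A}|_{\ker Q})$. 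The spectral inclusion for the restricted semigroup gives $\sigma(\mathcal{A}|_{\ker Q})\subset\{\mathrm{Re}\,\lambda<-\eta\}$, while $\mathcal{A}|_{\mathrm{ran}\,Q}$ is a matrix. Hence $\sigma(\mathcal{A})\cap\{\mathrm{Re}\,\lambda\ge-\eta\}$ consists of finitely many isolated eigenvalues. Their Riesz projections are subprojections of $Q$, hence of finite rank, so they are poles. This is the reasoning behind the result the paper cites for \eqref{alternative_characterisation_w_0}, and it removes your lattice ``main obstacle'' altogether.

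Alternatively, keep your route but add a Fredholm step. From $e^{t_0\lambda}-\mathcal{T}(t_0)=(\lambda-\mathcal{A})\int_0^{t_0}e^{\lambda(t_0-s)}\mathcal{T}(s)\,\ud s$, one gets that $\lambda-\mathcal{A}$ is Fredholm for $\mathrm{Re}\,\lambda>\omega_{\text{ess}}$. Its index is $0$ there, since the index is constant on this connected half-plane and vanishes on $\{\mathrm{Re}\,\lambda>\omega_0\}\subset\rho(\mathcal{A})$. So every spectral value in that half-plane is an eigenvalue, and your lattice count then yields the required finiteness. With either addition, the rest of your proposal goes through.
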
 
\noindent We are now in the position to prove our main Theorem \ref{mainresult}.

\begin{proofof}{Theorem \ref{mainresult}}
As a first step, we apply Theorem \ref{quasi-compact-char} to the strongly continuous and quasi-compact semigroup $\mathcal{T}(t)$ generated by $\mathcal{A}=\mathcal{B}+\mathcal{C}+\mathcal{N}$. By (i), 
    \begin{align}
    \label{set_of_poles}
      \{\lambda \in \sigma(\mathcal{A})\mid\mathrm{Re}(\lambda)\geq 0\}=\{\lambda_1,...,\lambda_m\}. 
    \end{align}
If the spectrum of $\mathcal{A}$ is empty, then by definition $s(\mathcal{A})=-\infty$ and we conclude with Remark \ref{rem:kappa} and \eqref{alternative_characterisation_w_0}	
    \begin{align*}
    \omega_0\left({\mathcal T}\right)= \max\,\left\{\omega_\text{ess}\left({\mathcal T}\right), s\left({\mathcal A}\right)\right\}\leq-\kappa<0,
    \end{align*}
i.e. $\mathcal{T}(t)$ is uniformly exponentially stable.\\
So for the rest of the proof we assume $\sigma(\mathcal{A})\neq \emptyset$. According to Remark \ref{rem:positivity} ii) the semigroup $\mathcal{T}(t)$ is positive which implies $s(\mathcal{A})\in\sigma(\mathcal{A})$, see \cite[Ch. VI, Theorem 1.10]{EngelNagel}. If  $s(\mathcal{A})< 0$, then the set in \eqref{set_of_poles} is empty and we conclude $\omega_0(\mathcal{T})<0$ as before. However, if $s(\mathcal{A})\geq 0$, then the finite set of poles  \eqref{set_of_poles} is not empty and consists of eigenvalues of $\mathcal{T}$, see \cite[VIII.8, Theorem 3]{Yosida}. Set 
    \begin{align*}
    \lambda_*:=s(\mathcal{A})=\max\{\mathrm{Re}(\lambda_1),...,\mathrm{Re}(\lambda_m)\}.
    \end{align*}
Then $\lambda_*\geq 0$ is an isolated eigenvalue of finite algebraic multiplicity. Let $P_*$ be the corresponding spectral projection which has finite rank, see \cite[Ch. IV, Cor 2.11]{EngelNagel}. According to Theorem 3.1 in \cite[Ch.V]{EngelNagel} the semigroup $\mathcal{T}(t)$ decomposes as 
    \begin{align*}
    \mathcal{T}(t)=\mathcal{T}_*(t) + Q(t),
    \end{align*}
where
    \begin{align*}
    \mathcal{T}_*(t)=e^{\lambda_*\, t}\sum_{j=0}^{k_*-1}\frac{t^j}{j!}\left(\mathcal{A}-\lambda_*\right)^j\, P_*,
    \end{align*}
Furthermore, for any $\delta>0$ with
    \begin{align*}
    \lambda_*-\delta>\sup\{\omega_{ess}(\mathcal{T})\}\cup\{\mathrm{Re}(\lambda)\mid \lambda\in \sigma(\mathcal{A})\setminus\{\lambda_*\}\}, 
    \end{align*}
there exists $M_{\delta}>0$ such that we have an estimate of the form
    \begin{align*}
        ||Q(t)||\le M_\delta e^{(\lambda_*-\delta) t} \qquad \forall t\geq 0,
    \end{align*}
which shows (ii). If the semigroup $\mathcal{T}(t)$ additionally is irreducible, then $\lambda_*=s(\mathcal{A})$ is a dominant eigenvalue of multiplicity one with residue $P_*$ of rank one, see  \cite[C-III, Proposition 3.5]{Arendt}.
This implies that the semigroup $\mathcal{T}(t)$ exhibits asynchronous exponential growth, see e.g. \cite{Webb1987}, i.e.
\begin{align*}
    \left|\left|e^{-\lambda_*\,t}\,\mathcal{T}(t)-P_*\right|\right|\le M\,e^{-\varepsilon\,t},
    \end{align*}
for some $M\ge 1$ and $\varepsilon>0$. This completes the proof.
\end{proofof}

\noindent Next we formulate a sufficient condition for the semigroup $\mathcal{T}(t)$ generated by $\mathcal{B}+\mathcal{C}+\mathcal{N}$ to be irreducible.

\begin{proposition}\label{irreducible}
If there exists a $\hat{y}>0$, such that $0\in\displaystyle\bigcap_{y>\hat{y}}\text{supp}\,(\eta(y))$, then the semigroup $\mathcal{T}(t)$ generated by $\mathcal{B}+\mathcal{C}+\mathcal{N}$ is irreducible. 
\end{proposition}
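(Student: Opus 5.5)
The plan is to show directly that for every $0\not\equiv\mu\in\mathcal{M}^+(\R^+)$ and every $0\not\equiv\varphi\in BL_+(\R^+)$ there is a time $t>0$ with $\langle\mathcal{T}(t)\mu,\varphi\rangle_{E,E^*}>0$. We do this by comparing $\mathcal{T}(t)$ with simpler positive semigroups.

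\textbf{Step 1: lower bounds.} Since $\mathcal{N}$ is positive, the Dyson--Phillips expansion of $\mathcal{T}$ around $\mathcal{T}_{\mathcal{B}+\mathcal{C}}$ (bounded perturbation by $\mathcal{N}$) has nonnegative terms. Hence, for $0\le s\le t$,
\begin{align*}
\mathcal{T}(t)\mu\ \ge\ \mathcal{T}_{\mathcal{B}+\mathcal{C}}(t)\mu,\qquad
\mathcal{T}(t)\mu\ \ge\ \mathcal{T}_{\mathcal{B}+\mathcal{C}}(t-s)\,\mathcal{N}\,\mathcal{T}(s)\mu .
\end{align*}
The semigroup $\mathcal{T}_{\mathcal{B}+\mathcal{C}}$ is explicit. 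By duality with \eqref{T_B*} and \eqref{T_C*}, it acts as the push-forward of $\mu$ along the flow $X_b(t,\cdot)$, weighted by the strictly positive factor $\exp(-\int_0^t c(X_b(s,x))\,\ud s)\ge e^{-t\|c\|_\infty}$. In particular, $\mathcal{T}_{\mathcal{B}+\mathcal{C}}(t)$ maps a nonzero positive measure to a nonzero positive measure whose support is $X_b(t,\operatorname{supp}\mu)$.

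\textbf{Step 2: reach the region $y>\hat y$, then jump to $0$.} Since $b>0$ and $b'\le0$, we have $b\ge b_\infty:=\lim_{x\to\infty}b(x)\ge0$. I first use $b>0$ with $b$ continuous and nonincreasing. Then $X_b(t,x)\to\infty$ for every $x$: if $X_b(t,x)$ stayed bounded by some $R$, then $\partial_t X_b\ge b(R)>0$, a contradiction. Take $x_0\in\operatorname{supp}\mu$ and choose $s$ with $X_b(s,x_0)>\hat y+1$. By continuity of the flow, $\nu_s:=\mathcal{T}_{\mathcal{B}+\mathcal{C}}(s)\mu\le\mathcal{T}(s)\mu$ gives positive mass to $(\hat y,\infty)$. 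Now apply $\mathcal{N}$. By hypothesis, $0\in\operatorname{supp}\eta(y)$ for all $y>\hat y$, so $\eta(y)([0,\varepsilon))>0$ for every $\varepsilon>0$. Integrating against $\nu_s$ restricted to $(\hat y,\infty)$ (measurability comes from the Lipschitz continuity of $\eta$) yields $(\mathcal{N}\nu_s)([0,\varepsilon))>0$ for every $\varepsilon>0$. Thus $0\in\operatorname{supp}(\mathcal{N}\mathcal{T}(s)\mu)$.

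\textbf{Step 3: sweep through $\operatorname{supp}\varphi$.} Pick $z$ with $\varphi(z)>0$. Because $\varphi$ is continuous, $\varphi>0$ on some interval $(z-r,z+r)$. The boundary point $0$ is mapped by the flow onto the whole half-line: $\tau\mapsto X_b(\tau,0)$ is a strictly increasing continuous bijection onto $[0,\infty)$. Hence there is $\tau>0$ with $X_b(\tau,0)=z$. By continuity there is $\varepsilon>0$ with $X_b(\tau,[0,\varepsilon))\subset(z-r,z+r)$. Set $t=s+\tau$. Then
\begin{align*}
\langle\mathcal{T}(t)\mu,\varphi\rangle\ \ge\ e^{-\tau\|c\|_\infty}\int_{[0,\varepsilon)}\varphi(X_b(\tau,x))\,\ud[\mathcal{N}\mathcal{T}(s)\mu](x)>0 .
\end{align*}
The case $z<X_b(s,\cdot)$-range is not an issue, because the path always restarts at $0$ through $\mathcal{N}$.

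\textbf{Main obstacle.} I expect the hardest part to be the rigorous justification of the comparison $\mathcal{T}(t)\ge\mathcal{T}_{\mathcal{B}+\mathcal{C}}(t-s)\mathcal{N}\mathcal{T}(s)$ on $E$, together with the identification of $\mathcal{T}_{\mathcal{B}+\mathcal{C}}$ as a weighted push-forward on measures rather than only through its adjoint. My first approach is to use the variation-of-constants formula $\mathcal{T}(t)\mu=\mathcal{T}_{\mathcal{B}+\mathcal{C}}(t)\mu+\int_0^t\mathcal{T}_{\mathcal{B}+\mathcal{C}}(t-r)\mathcal{N}\mathcal{T}(r)\mu\,\ud r$, which is valid since $\mathcal{N}$ is bounded. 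Positivity of each factor gives both inequalities, the second after integrating over $r$ near $s$ and using strong continuity. The push-forward description follows by testing against $\varphi\in BL$ and using \eqref{T_B*}, \eqref{T_C*}.
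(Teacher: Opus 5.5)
Your argument rests on the same mechanism as the paper: transport carries mass beyond $\hat y$, $\eta$ re-injects mass arbitrarily close to $0$, and transport then sweeps the whole half-line. However, one displayed step is wrong as stated.

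\textbf{The faulty inequality.} The second inequality in Step 1, $\mathcal{T}(t)\mu\ge\mathcal{T}_{\mathcal{B}+\mathcal{C}}(t-s)\mathcal{N}\mathcal{T}(s)\mu$, does not follow from the variation-of-constants formula and is false in general. $\mathcal{N}$ acts as a rate, so the formula only yields $\mathcal{T}(t)\mu\ge\int_I\mathcal{T}_{\mathcal{B}+\mathcal{C}}(t-r)\mathcal{N}\mathcal{T}(r)\mu\,\ud r$ for subintervals $I\subset[0,t]$.

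For a counterexample, take $b\equiv1$, $c\equiv\kappa$, $\eta(x)=K\delta_x$, which satisfies Assumptions \ref{Assum}. Then $\mathcal{T}(t)\mu$ is $e^{(K-\kappa)t}$ times the translate of $\mu$ by $t$. The right-hand side is $Ke^{Ks-\kappa t}$ times the same translate. At $s=t$ your inequality would force $K\le1$.

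The final display of Step 3 relies on this inequality.

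\textbf{The repair.} It is immediate, since you only need strict positivity of a pairing. Set $g(r):=\langle\mathcal{T}_{\mathcal{B}+\mathcal{C}}(t-r)\mathcal{N}\mathcal{T}(r)\mu,\varphi\rangle\ge0$. It is continuous in $r$, by strong continuity of both semigroups and boundedness of $\mathcal{N}$. Your Steps 2--3 show exactly that $g(s)>0$, hence $\langle\mathcal{T}(t)\mu,\varphi\rangle\ge\int_0^t g(r)\,\ud r>0$. Also pick $z>0$, so that $\tau>0$.

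\textbf{Your ``main obstacle''.} The paper already supplies what you need. Test the weak formulation with the backward characteristic solution $\tilde\varphi$, as in the proof of Proposition \ref{prop:abs_continuity}. This gives $\langle\mu_t,\psi\rangle=\langle\mu_0,\tilde\varphi(0,\cdot)\rangle+\int_0^t\langle\mathcal{N}\mu_r,\tilde\varphi(r,\cdot)\rangle\,\ud r$ with $\tilde\varphi$ explicit. That identity is precisely the weak-form Duhamel formula, and it gives both the comparison and the weighted push-forward description at once. It requires $C^1$ test functions, so replace $\varphi$ by a $C^1$ bump $0\le\psi\le\varphi$ that is positive near $z$.

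\textbf{Comparison with the paper.} With this fix, your route differs from the paper's in form rather than mechanism. The paper first reduces irreducibility to the condition $\bigcup_{t\ge0}\operatorname{supp}(\mathcal{T}(t)\mu)=\R^+$ and then argues qualitatively with supports. It justifies only briefly that supports propagate to the right and that $0\in\operatorname{supp}(\mathcal{T}(t)\mu)$ for all $t>t^*$. You instead verify the definition directly on the pairing and make both propagation facts explicit through the Duhamel lower bound and the explicit form of $\mathcal{T}_{\mathcal{B}+\mathcal{C}}$. Your version is longer, but it bypasses the support characterization entirely and provides the justification the paper's two propagation claims actually need. The paper's version is shorter and biologically more transparent.
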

\begin{proof}
First note that the characterisation of irreducibility given in Definition \ref{irr}  is equivalent to the following condition: 
    \begin{align}
    \label{equivalent_irreducibility_condition}
    \forall\,\mu\in \mathcal{M}^+(\R^+),\,\mu\not\equiv 0\quad \text{we have}\quad \displaystyle\bigcup_{t\ge 0} \text{supp}(\mathcal{T}(t)\,\mu)=\mathbb{R}^+.
    \end{align}
Indeed, for the first implication assume by way of contradiction that  there exists $0\neq\mu\in \mathcal{M}^+(\R^+)$ with
    \begin{align*}
        \bigcup_{t\geq 0}\text{supp}(\mathcal{T}(t)\mu)\neq \R^+.
    \end{align*}
As the support is closed, there exists $x\in \R^+$ and $r>0$ with $\mathcal{T}(t)\mu(x-r,x+r)=0$. Now consider a nonnegative function $\varphi\in BL(\R^+)=E^*$, see Remark \ref{remark:measure_space} ii), supported in $(x- r/2,x+r/2)$. Then 
    \begin{align*}
    \langle \mathcal{T}(t)\mu,\varphi\rangle_{E,E^*}=\int_{\R^+}\varphi(y)\ud[\mathcal{T}(t)\mu](y)=\int_{\left(x-\frac r 2,x+\frac r 2\right)}\varphi(y)\ud[\mathcal{T}(t)\mu](y)=0,
    \end{align*}
which contradicts the definition of irreducibility in Definition \ref{irr}. For the other direction, assume there exists $0\neq\mu\in \mathcal{M}^+(\R^+)$ and $0\neq \varphi\in BL_+(\R^+)$ such that for all $t>0$
    \begin{align*}
    \langle \mathcal{T}(t)\mu,\varphi\rangle_{E,E^*}=0.
    \end{align*}
As $\varphi\neq 0$, there exists an interval $I\subseteq \R^+$ with $\varphi\mid_I>0$. Using \eqref{equivalent_irreducibility_condition} and restricting $I$ if necessary we can assume the existence of a time $t>0$  with $\mathcal{T}(t)\mu(I)>0$. Hence,
    \begin{align*}
        0=\int_{\R^+}\varphi(y)\ud[\mathcal{T}(t)\mu](y)\geq  \int_I \varphi(y)\ud[\mathcal{T}(t)\mu](y)>0.
    \end{align*}
In particular, \eqref{equivalent_irreducibility_condition} is indeed an equivalent condition for irreducibility.\\
Next we note that if for any measure $\mu\in \mathcal{M}^+(\R^+)$ there is a time $t^*\ge 0$ such the point $y^*\in \text{supp}(\mathcal{T}(t^*)\,\mu)$,
then for all $y>y^*$ there exists $t\ge t^*$ with $y\in\text{supp}(\mathcal{T}(t)\,\mu)$. This is due to the strict positivity of $b$ and the boundedness of $c$. \\
\noindent  Consequently, it is sufficient to check that $0\in \mathrm{supp}(\mathcal{T}(t)\mu)$ for some $t$. To this end, take any $0\not\equiv \mu\in \mathcal{M}^+(\R^+)$. By the previous observation (and since $\mu$ is not the zero element), there exist $t^*\ge 0$ and $y^*>\hat{y}$ with  $y^*\in\mathrm{supp}(\mathcal{T}(t^*)\,\mu)$. As $0\in \mathrm{supp}\,\eta(y^*)$, it follows that $0\in \mathrm{supp}(\mathcal{T}(t)\,\mu)$ for every $t>t^*$, which concludes the proof.
\end{proof}

\noindent Note that from the biological point of view the irreducibility condition above is very natural, it requires (when interpreting the structuring variable $x$ as individual size for example) that large individuals produce offspring of minimal size, see e.g. \cite{CDF}.

\section{Acknowledgements}
\noindent CD and AMC were supported by the European Research Council (ERC) under the European Union’s Horizon 2020 research and innovation programme (synergy project PEPS, no. 101071786). PG was supported by  NCN UMO-2023/51/B/ST1/01546.  AMC gratefully acknowledges support of 4EU+ Alliance Visiting Professorships Programme for a research stay at the University of Warsaw.

\bibliographystyle{abbrv}
\bibliography{bibliography}

@article{AcklehFarkas,
  author  = {A. S. Ackleh and J. Z. Farkas},
  title   = {On the net reproduction rate of continuous structured populations with distributed states at birth},
  journal = {Computers \& Mathematics with Applications},
  volume  = {66},
  pages   = {1685--1694},
  year    = {2013}
}

@article{Ackleh1,
  author  = {A. S. Ackleh and K. Ito},
  title   = {Measure-valued solutions for a hierarchically size-structured population},
  journal = {Journal of Differential Equations},
  volume  = {217},
  pages   = {431--455},
  year    = {2005}
}

@book{Amman,
  author    = {H. Amman and J. Escher},
  title     = {Analysis II},
  publisher = {Birkh{\"a}user},
  address   = {Basel},
  year      = {2006}
}

@book{Arendt,
  author    = {W. Arendt and A. Grabosch and G. Greiner and U. Groh and H. P. Lotz and U. Moustakas and R. Nagel and F. Neubrander and U. Schlotterbeck},
  title     = {One-Parameter Semigroups of Positive Operators},
  publisher = {Springer},
  address   = {Berlin},
  year      = {1986}
}

@book{MagalRuan,
  author    = {P. Auger and P. Magal and S. Ruan},
  title     = {Structured Population Models in Biology and Epidemiology},
  series    = {Lecture Notes in Mathematics},
  volume    = {1936},
  publisher = {Springer},
  address   = {Berlin},
  year      = {2008}
}

@article{EBT,
  author  = {{\AA}. Br{\"a}nnstr{\"o}m and L. Carlsson and D. Simpson},
  title   = {On the convergence of the escalator boxcar train},
  journal = {SIAM Journal on Numerical Analysis},
  volume  = {51},
  pages   = {3213--3231},
  year    = {2013}
}

@article{BCMC,
  author  = {J. E. Busse and S. Cuadrado and A. Marciniak-Czochra},
  title   = {Local asymptotic stability of a system of integro-differential equations describing clonal evolution of a self-renewing cell population under mutation},
  journal = {Journal of Mathematical Biology},
  volume  = {84},
  pages   = {1--36},
  year    = {2022}
}

@article{BGMC,
  author  = {J.-E. Busse and P. Gwiazda and A. Marciniak-Czochra},
  title   = {Mass concentration in a nonlocal model of clonal selection},
  journal = {Journal of Mathematical Biology},
  volume  = {73},
  pages   = {1001--1033},
  year    = {2016}
}

@article{BuergerBomze,
  author  = {R. B{\"u}rger and I. M. Bomze},
  title   = {Stationary distributions under mutation-selection balance: structure and properties},
  journal = {Advances in Applied Probability},
  volume  = {28},
  pages   = {227--251},
  year    = {1996}
}

@article{CalsinaCuadrado,
  author  = {{\`A}. Calsina and S. Cuadrado},
  title   = {Small mutation rate and evolutionarily stable strategies in infinite dimensional adaptive dynamics},
  journal = {Journal of Mathematical Biology},
  volume  = {48},
  pages   = {135--159},
  year    = {2004}
}

@article{CalsinaCuadrado2,
  author  = {{\`A}. Calsina and S. Cuadrado},
  title   = {Stationary solutions of a selection mutation model: The pure mutation case},
  journal = {Mathematical Models and Methods in Applied Sciences},
  volume  = {15},
  pages   = {1091--1117},
  year    = {2005}
}

@article{CDF,
  author  = {{\`A}. Calsina and O. Diekmann and J. Z. Farkas},
  title   = {Structured populations with distributed recruitment: from PDE to delay formulation},
  journal = {Mathematical Methods in the Applied Sciences},
  volume  = {39},
  pages   = {5175--5191},
  year    = {2016}
}

@article{CCGU,
  author  = {J. A. Carrillo and R. M. Colombo and P. Gwiazda and A. Ulikowska},
  title   = {Structured populations, cell growth and measure valued balance laws},
  journal = {Journal of Differential Equations},
  volume  = {252},
  pages   = {3245--3277},
  year    = {2012}
}

@article{CGU,
  author  = {J. A. Carrillo and P. Gwiazda and A. Ulikowska},
  title   = {Splitting-Particle Methods for Structured Population Models: Convergence and Applications},
  journal = {Mathematical Models and Methods in Applied Sciences},
  volume  = {24},
  pages   = {2171--2197},
  year    = {2014}
}

@book{Clement,
  author    = {Ph. Cl{\'e}ment and H. J. A. M. Heijmans and S. Angenent and C. J. van Duijn and B. de Pagter},
  title     = {One-Parameter Semigroups},
  publisher = {North-Holland},
  address   = {Amsterdam},
  year      = {1987}
}

@article{Desvillettes,
  author  = {L. Desvillettes and P. E. Jabin and S. Mischler and G. Raoul},
  title   = {On mutation-selection dynamics},
  journal = {Communications in Mathematical Sciences},
  volume  = {6},
  pages   = {729--747},
  year    = {2008}
}

@book{Duell_book,
  author    = {C. D{\"u}ll and P. Gwiazda and A. Marciniak-Czochra and J. Skrzeczkowski},
  title     = {Spaces of Measures and Their Applications to Structured Population Models},
  publisher = {Cambridge University Press},
  year      = {2021}
}

@phdthesis{Duell_thesis,
  title        = {Generalising nonlinear population models- Radon measures, Polish spaces and the flat norm},
  author       = {Düll, Christian},
  year         = 2024,
  note         = {Available at https://archiv.ub.uni-heidelberg.de/volltextserver/view/creators/index.D.html},
  school       = {Heidelberg University},
  type         = {PhD thesis}
}

@article{M3AS_measures,
  author  = {C. D{\"u}ll and P. Gwiazda and A. Marciniak-Czochra and J. Skrzeczkowski},
  title   = {Structured Population Models on Polish Spaces: A Unified Approach Including Graphs, Riemannian Manifolds and Measure Spaces to Describe Dynamics of Heterogeneous Populations},
  journal = {Mathematical Models and Methods in Applied Sciences},
  volume  = {34},
  number  = {1},
  pages   = {109--143},
  year    = {2024}
}

@book{EngelNagel,
  author    = {K.-J. Engel and R. Nagel},
  title     = {One-Parameter Semigroups for Linear Evolution Equations},
  publisher = {Springer-Verlag},
  year      = {2000}
}

@article{EversHilleMuntean,
  author  = {J. Evers and S. Hille and A. Muntean},
  title   = {Mild solutions to a measure-valued mass evolution problem with flux boundary conditions},
  journal = {Journal of Differential Equations},
  volume  = {259},
  pages   = {1068--1097},
  year    = {2015}
}

@article{FarkasHagen,
  author  = {J. Z. Farkas and T. Hagen},
  title   = {Asymptotic analysis of a size-structured cannibalism model with infinite dimensional environmental feedback},
  journal = {Communications on Pure and Applied Analysis},
  volume  = {8},
  pages   = {1825--1839},
  year    = {2009}
}

@article{FarkasHinow,
  author  = {J. Z. Farkas and D. M. Green and P. Hinow},
  title   = {Semigroup analysis of structured parasite populations},
  journal = {Mathematical Modelling of Natural Phenomena},
  volume  = {5},
  pages   = {94--114},
  year    = {2010}
}

@article{FournierPerthame1,
  author  = {N. Fournier and B. Perthame},
  title   = {A transport distances for PDEs: The coupling method},
  journal = {EMS Surveys in Mathematical Sciences},
  volume  = {7},
  pages   = {1--31},
  year    = {2021}
}

@article{FournierPerthame2,
  author  = {N. Fournier and B. Perthame},
  title   = {A non-expanding transport distance for some structured equations},
  journal = {SIAM Journal on Mathematical Analysis},
  volume  = {53},
  number  = {6},
  pages   = {6847--6872},
  year    = {2021}
}

@article{GLMC,
  author  = {P. Gwiazda and T. Lorenz and A. Marciniak-Czochra},
  title   = {A nonlinear structured population model: Lipschitz continuity of measure valued solutions with respect to model ingredients},
  journal = {Journal of Differential Equations},
  volume  = {248},
  pages   = {2703--2735},
  year    = {2010}
}

@article{GMC,
  author  = {P. Gwiazda and A. Marciniak-Czochra},
  title   = {Structured population equations in metric spaces},
  journal = {Journal of Hyperbolic Differential Equations},
  volume  = {7},
  pages   = {733--773},
  year    = {2010}
}

@article{GJMCU,
  author  = {P. Gwiazda and J. Jab{\l}o\'nski and A. Marciniak-Czochra and A. Ulikowska},
  title   = {Analysis of particle methods for structured population models with nonlocal boundary term in the framework of bounded {L}ipschitz distance},
  journal = {Numerical Methods for Partial Differential Equations},
  volume  = {30},
  pages   = {1797--1820},
  year    = {2014}
}

@article{MagalWebb,
  author  = {P. Magal and G. F. Webb},
  title   = {Mutation, selection, and recombination in a model of phenotype evolution},
  journal = {Discrete and Continuous Dynamical Systems},
  volume  = {6},
  pages   = {221--236},
  year    = {2000}
}

@book{MetzDiekmann,
  editor    = {J. A. J. Metz and O. Diekmann},
  title     = {The Dynamics of Physiologically Structured Populations},
  series    = {Lecture Notes in Mathematics},
  volume    = {68},
  publisher = {Springer-Verlag},
  address   = {Berlin},
  year      = {1986}
}

@article{MischlerScher,
  author  = {S. Mischler and P. Scher},
  title   = {Spectral analysis of semigroups and growth-fragmentation equations},
  journal = {Annales de l'Institut Henri Poincaré - Analyse Non Linéaire},
  volume  = {33},
  pages   = {849--898},
  year    = {2015}
}

@book{Perthame,
  author    = {B. Perthame},
  title     = {Transport Equations in Biology},
  series    = {Frontiers in Mathematics},
  publisher = {Birkhäuser},
  address   = {Basel},
  year      = {2007}
}

@article{Piccoli,
  author  = {B. Piccoli and F. Rossi},
  title   = {Generalized {W}asserstein distance and its application to transport equations with source},
  journal = {Archive for Rational Mechanics and Analysis},
  volume  = {211},
  pages   = {335--358},
  year    = {2014}
}

@book{Sch,
  author    = {H. H. Sch\"{a}fer},
  title     = {Banach lattices and positive operators},
  publisher = {Springer-Verlag},
  address   = {Berlin},
  year      = {1974}
}

@article{Thieme1998DCDS,
  author  = {H. R. Thieme},
  title   = {Positive perturbation of operator semigroups: growth bounds, essential compactness, and asynchronous exponential growth},
  journal = {Discrete and Continuous Dynamical Systems},
  volume  = {4},
  pages   = {735--764},
  year    = {1998}
}

@article{Thieme1998JMAA,
  author  = {H. R. Thieme},
  title   = {Balanced exponential growth of operator semigroups},
  journal = {Journal of Mathematical Analysis and Applications},
  volume  = {223},
  pages   = {30--49},
  year    = {1998}
}

@book{Thieme,
  author    = {H. R. Thieme},
  title     = {Mathematics in Population Biology},
  series    = {Princeton Series in Theoretical and Computational Biology},
  publisher = {Princeton University Press},
  address   = {Princeton, NJ},
  year      = {2003}
}

@article{two_sex,
  author  = {A. Ulikowska},
  title   = {An age-structured, two-sex model in the space of Radon measures: well posedness},
  journal = {Kinetic and Related Models},
  volume  = {5},
  pages   = {873--900},
  year    = {2012}
}

@article{WebbGrabosch,
  author  = {G. F. Webb and A. Grabosch},
  title   = {Asynchronous exponential growth in transition probability models of the cell cycle},
  journal = {SIAM Journal on Mathematical Analysis},
  volume  = {18},
  pages   = {897--908},
  year    = {1987}
}

@article{Webb1987,
  author  = {G. F. Webb},
  title   = {An operator-theoretic formulation of asynchronous exponential growth},
  journal = {Transactions of the American Mathematical Society},
  volume  = {303},
  pages   = {751--763},
  year    = {1987}
}

@book{Webb,
  author    = {G. F. Webb},
  title     = {Nonlinear Age-Dependent Population Dynamics},
  publisher = {Dekker},
  year      = {1985}
}

@article{vanGaans,
title = "Subspaces of normed Riesz spaces",
author = "{van Gaans}, OW",
year = "2004",
volume = "8",
pages = "143--164",
journal = "Positivity: an international journal devoted to the theory and applications of positivity in analysis",
issn = "1385-1292",
publisher = "Springer",
number = "2",
}

@book {Yosida,
    AUTHOR = {Yosida, K\B{o}saku},
     TITLE = {Functional analysis},
    SERIES = {Classics in Mathematics},
      NOTE = {Reprint of the sixth (1980) edition},
 PUBLISHER = {Springer-Verlag, Berlin},
      YEAR = {1995},
     PAGES = {xii+501},
      ISBN = {3-540-58654-7      },
   MRCLASS = {46-01 (47-01)},
  MRNUMBER = {1336382},
       DOI = {10.1007/978-3-642-61859-8     },
       URL = {https://doi.org/10.1007/978-3-642-61859-8  }, 
}

@book {EvansGariepy,
    AUTHOR = {Evans, Lawrence C. and Gariepy, Ronald F.},
     TITLE = {Measure theory and fine properties of functions},
    SERIES = {Textbooks in Mathematics},
   EDITION = {Revised},
 PUBLISHER = {CRC Press, Boca Raton, FL},
      YEAR = {2015},
     PAGES = {xiv+299},
      ISBN = {978-1-4822-4238-6},
   MRCLASS = {28-01},
  MRNUMBER = {3409135},
}

@article {MR4330732,
    AUTHOR = {Ackleh, Azmy S. and Lyons, Rainey and Saintier, Nicolas},
     TITLE = {A structured coagulation-fragmentation equation in the space
              of {R}adon measures: unifying discrete and continuous models},
   JOURNAL = {ESAIM Math. Model. Numer. Anal.},
  FJOURNAL = {ESAIM. Mathematical Modelling and Numerical Analysis},
    VOLUME = {55},
      YEAR = {2021},
    NUMBER = {5},
     PAGES = {2473--2501},
      ISSN = {2822-7840,2804-7214},
   MRCLASS = {35L60 (35Q92 45K05 65M06 92D25)},
  MRNUMBER = {4330732},
       DOI = {10.1051/m2an/2021061},
       URL = {https://doi.org/10.1051/m2an/2021061},
}

@book{Webb85,
year = {1985},
author = {Webb, Glenn F.},
address = {New York [u.a},
booktitle = {Theory of nonlinear age dependent population dynamics},
isbn = {0824772903},
language = {eng},
publisher = {Dekker},
series = {Pure and applied mathematics 89},
title = {Theory of nonlinear age dependent population dynamics },

}

@article{KangShigui,
author = {Kang, Hao and Ruan, Shigui},
year = {2021},
month = {10},
pages = {1-49},
title = {Principal spectral theory and asynchronous exponential growth for age-structured models with nonlocal diffusion of Neumann type},
volume = {384},
journal = {Mathematische Annalen},
doi = {10.1007/s00208-021-02270-y}
}

@article {MR2287895,
    AUTHOR = {Lasota, Andrzej and Szarek, Tomasz},
     TITLE = {Lower bound technique in the theory of a stochastic
              differential equation},
   JOURNAL = {J. Differential Equations},
  FJOURNAL = {Journal of Differential Equations},
    VOLUME = {231},
      YEAR = {2006},
    NUMBER = {2},
     PAGES = {513--533},
      ISSN = {0022-0396},
   MRCLASS = {60H15 (35R60 37A30 47D07 60J35)},
  MRNUMBER = {2287895},
MRREVIEWER = {Dieter H. Mayer},
       DOI = {10.1016/j.    jde.2006.04.018},
       URL = {https://doi.org/10.1016/j          .jde.2006.04.018},  
}

@article {MR2271485,
    AUTHOR = {Szarek, Tomasz},
     TITLE = {Feller processes on nonlocally compact spaces},
   JOURNAL = {Ann. Probab.},
  FJOURNAL = {The Annals of Probability},
    VOLUME = {34},
      YEAR = {2006},
    NUMBER = {5},
     PAGES = {1849--1863},
      ISSN = {0091-1798},
   MRCLASS = {60J05 (37A30)},
  MRNUMBER = {2271485},
MRREVIEWER = {Niels Jacob},
       DOI = {10.1214/009117906000000313       },
       URL = {https://doi.org/10.1214/009117906000000313           },   
}

@article {MR2995659,
    AUTHOR = {Szarek, Tomasz and Worm, Dani\"{e}l T. H.},
     TITLE = {Ergodic measures of {M}arkov semigroups with the e-property},
   JOURNAL = {Ergodic Theory Dynam. Systems},
  FJOURNAL = {Ergodic Theory and Dynamical Systems},
    VOLUME = {32},
      YEAR = {2012},
    NUMBER = {3},
     PAGES = {1117--1135},
      ISSN = {0143-3857},
   MRCLASS = {60J25 (37A30 47A35 47D06 47D07)},
  MRNUMBER = {2995659},
MRREVIEWER = {Wojciech Bartoszek},
       DOI = {10.1017/S0143385711000022          },
       URL = {https://doi.org/10.1017/S0143385711000022                  },   
}

@article{Thieme_2022,
title = {Discrete-time dynamics of structured populations via {F}eller kernels},
journal = {Discrete and Continuous Dynamical Systems - B},
volume = {27},number = {2},pages = {1091-1119},
year = {2022},
issn = {1531-3492},
doi = {10.3934/dcdsb.2021082   },
url = {/article/id/98b37710-f15a-4d56-afe2-afd87cb8c633},
author = {Horst R. Thieme},
}

@incollection {MR2857021,
    AUTHOR = {Hairer, Martin and Mattingly, Jonathan C.},
     TITLE = {Yet another look at {H}arris' ergodic theorem for {M}arkov
              chains},
 BOOKTITLE = {Seminar on {S}tochastic {A}nalysis, {R}andom {F}ields and
              {A}pplications {VI}},
    SERIES = {Progr. Probab.},
    VOLUME = {63},
     PAGES = {109--117},
 PUBLISHER = {Birkh\"{a}user/Springer Basel AG, Basel},
      YEAR = {2011},
      ISBN = {978-3-0348-0020-4},
   MRCLASS = {60J05 (37A30 37A50 47D07)},
  MRNUMBER = {2857021},
MRREVIEWER = {Wojciech\ Bartoszek},
       DOI = {10.1007/978-3-0348-0021-1\_7},
       URL = {https://doi.org/10.1007/978-3-0348-0021-1_7}                ,
}

\end{document}